\theoremstyle{plain}
\newtheorem{thm}{Theorem}[section]
\newtheorem{lemma}[thm]{Lemma}
\newtheorem{prop}[thm]{Proposition}
\newtheorem{cor}[thm]{Corollary}
\theoremstyle{definition}
\newtheorem{defn}[thm]{Definition}
\newtheorem{example}[thm]{Example}
\newtheorem{qu}[thm]{Question}
\newtheorem{notation}[thm]{Notation}
\newtheorem{remark}[thm]{Remark}
\newtheorem{meth}{Method}
\def\C{\mathbb{C}}
\def\N{\mathbb{N}}
\def\ov{\overline}
\def\im{\text{im}}
\newcommand{\mat}{M}
\newcommand{\1}{\mathbbm{1}}
\title{A construction of deformations to general algebras}
\author{Dave Bowman, Dora Pulji\'c, Agata Smoktunowicz }
\begin{document}

\maketitle

\color{red}


\color{black}

\begin{abstract} One of the questions investigated in deformation theory is to determine to which algebras can a given associative algebra be deformed. In this paper we investigate a different but related question, namely: for a given associative finite-dimensional $\C$-algebra $A$, find algebras $N$ which can be deformed to $A$. We develop a simple method which produces associative and flat deformations to investigate this question. As an application of this method we answer a question of Michael Wemyss about deformations of contraction algebras.
\end{abstract}

\section*{Introduction}


Michael Wemyss and Will Donovan developed a method for characterising commutative rings which appear in algebraic geometry, by using methods from noncommutative ring theory. In \cite{donovan_noncommutative_2016} they introduced contraction algebras which provide important insight into resolutions of noncommutative singularities and invariants of flops. Contraction algebras can be described in a purely algebraic way, by using generators and relations. They appear in many questions which can be investigated by noncommutative ring theorists who are not familiar with advanced methods of algebraic geometry. Some of these questions are related to nilpotent rings. In 2022, Gavin Brown and Michael Wemyss described all finite dimensional contraction algebras which are local \cite{brown2022local}. Notice that the Jacobson radical of a finite dimensional algebra is nilpotent, and hence contraction algebras which are local are very close to being nilpotent. Some other questions involve characterisation of contraction algebras which are not local.

Deformations of contraction algebras give insight into invariants of flops. In this context Michael Wemyss asked questions about deformations of contraction algebras which are local. Using geometric methods he was able to conjecture which rings can be obtained as deformations of these contraction algebras.

Since contraction algebras are noncommutative, the usual methods using derivations as in \cite{coll_explicit_nodate} cannot be applied. Another approach can be to use Hochschild cohomology and the Gerstenhaber bracket, however this often leads to complicated calculations as seen in \cite{shepler_pbw_2013}.

In this paper, we develop a method to calculate deformations of noncommutative
local algebras. In particular we answer a question of Michael Wemyss regarding deformations of contraction algebras. For information about contraction algebras we refer reader to \cite{wemyss2023lockdown,donovan_noncommutative_2016}.

In \cite[Conjecture 4.3]{MR3805201} Hua and Toda conjectured that there exists an algebraic flat deformation of the contraction algebra. As an application we confirm this conjecture for one of the contraction algebras constructed in \cite{brown2022local}. Note that our method can be applied to other contraction algebras described in \cite{brown2022local}. Observe that Theorem $4.2$ of \cite{MR3805201} gives important information about deformations of contraction algebras which were obtained by using geometric methods. More information related to the existence of geometric deformations is given on pages 7-9 in \cite{MR3414491}.

In \cite{shepler_pbw_2013} deformations of graded Hecke algebras were constructed using novel methods. It is known that deformations of graded 
Hecke algebras give important information about the Hecke algebras.
Our method could be also used as another method of choice for constructing deformations of graded Hecke algebras. Some other methods of constructing deformations of associative algebras  were described in \cite{MR3971234}. 

\section{Preliminaries}

\begin{defn}
     Let $S$ be a subset of a $k$-algebra $A$ for a commutative unital ring $k$. Then $S$ is a \textbf{generating set} if all elements of $A$ can be written as a $k$-linear sum of products of elements of $S$ using the operations in $A$.
\end{defn}

\begin{defn}[\cite{MR3971234}]\label{defdef}
A \textbf{formal deformation} $(A_t,\ast)$ of a $k$-algebra $A$ is an associative $k$-bilinear multiplication $\ast$ on the $k[t]$-module $A[t]$, such that in the specification at $t=0$, the multiplication corresponds to that on $A$; this multiplication is required to be determined by a multiplication on elements of $A$ and extended to $A[t]$ by
the Cauchy product rule.
\end{defn}

Any multiplication $\ast$ as in Definition \ref{defdef} is determined by products of pairs of elements of $A$: for $a, b \in A$, write
\[a\ast b = \sum_{i\geq 0}\mu_i(a\otimes b)t^i,\]
where $\mu_0$ denotes the usual product in $A$ and $\mu_i$ for $i\geq 1$ are $k$-linear functions from $A\otimes A$ to $ A$.\\

\begin{defn}[\cite{feigin_flat_2019}]
    Let $A$ be a $k$-algebra and for $t\in \C$ let $\{A_t\}$ be the family of algebras arising from a deformation $*$ of $A$. Then $*$ is a \textbf{flat} deformation if each $A_t$ has the same dimension. 
\end{defn}

\begin{notation}
    We will denote by $\C[x,y]$ the polynomial ring over $\C$ in two non-commuting variables, $x$ and $y$.
\end{notation}

\begin{notation}
    Consider monomials of the ring $\C[x,y]$. We can order the monomials using shortlex ordering by specification $1<x<y$. We denote the monomials of $\C[x,y]$ by $p_1, p_1,p_2\dots$
where $p_i<p_{j}$ for $i<j$. 
\end{notation}

\begin{notation}
    We will denote by $\C[x,y][t]$ the polynomial ring in variable $t$ with coefficients from the polynomial ring in two non-commuting variables $x$ and $y$, with coefficients from $\C$.
\end{notation}

\begin{notation}
    For $a_i, b_i \in A$ we define $f:\C[x,y][t]\rightarrow A[t]$  to be a homomorphism of $\C$-algebras such that 
    \[f:x\mapsto \sum_{i=1}^na_it^i,\quad  f:y\mapsto \sum_{i=1}^nb_it^i, \quad f:t\mapsto t.\]
    We will denote $\ker f$ by $I$.
\end{notation}

\begin{notation}\label{not:pi}
 For an element $p_{i}$ in $\C[x,y]$ we will denote by $\ov{p_{i}}$ the same element $p_i$, but with all instances of $x$ replaced by $\sum_{i=1}^na_it^i$, and all instances of $y$ replaced by $\sum_{i=1}^nb_it^i$. We will denote by $\ov{\ov{p_{i}}}$ the same element as $p_i$, but with all instances of $x$ replaced by $\sum_{i=1}^na_i$, and all instances of $y$ replaced by $\sum_{i=1}^nb_i$. 
\end{notation}

\begin{notation}
   Let $R$ be a ring and $I$ be an ideal of $R$, then elements of the factor ring $R/I$ will be denoted as $r+I$, where $r\in R$.
 Notice that $r+I=s+I$ for $r,s\in R$ if and only if $r-s\in I$.
\end{notation}

\section{Algorithm}

In this section we shall describe the simplest case of our proposed method. This requires the data of a $\C$-algebra $A$ and a two element generating set $\{a, b\}$. We shall state the steps of this algorithm and then provide notes regarding important components. Then some examples will be displayed.

\begin{meth} \label{meth}
We fix a finite dimensional $\C$-algebra $A$ with two generators $a, b$. Then: 
\begin{enumerate}
    \item We consider $A[t]$, the polynomial algebra over $A$. We will use $t$ as the parameter of our deformation. We define: 
    \begin{align*}
        x &:= ta, \\ 
        y &:= tb.
    \end{align*}
    \item We calculate: 
    \begin{align*}
        x^2 &= t^2 a^2, \\ 
        xy &= t^2 ab, \\ 
        yx &= t^2 ba, \\ 
        y^2 &= t^2 b^2, \\ 
    \end{align*}
    and continue to calculate larger products of $x, y$. We shall then proceed with a Diamond Lemma\footnote{See section 1 of \cite{bergman_diamond_1978} } like decomposition of large products of $a, b$ in terms of smaller products of $a, b$. This will cause all elements of sufficiently large length to have a power of $t$ as a factor. In doing so we will obtain relations on $x, y$ and products thereof. We terminate this process when we have enough relations to decompose any large product into only multiples of $x,y$ and powers of $t$. Our relations will be given by polynomials $p_1, \dots, p_m \in \C[x,y][t]$ for some $m\in \N.$ This terminates finitely because $A$ finite dimensional.
    \item We present the algebra: 
    \begin{equation*}
        \mathcal{N} := {\C[x,y][t]}/{\left< p_1, \dots, p_m \right>}.
    \end{equation*}
    We note that sufficiently large products of $x, y$ will obtain a factor of $t$. Now we evaluate $t$ at various values in $T = [0, \infty)$. We denote by $N_s$ the algebra that arises from $\mathcal{N}$ by evaluating $t$ at $s$ and in particular we write $N = N_0$. By step (2) $N$ is local, in Section 5 we consider a more general method where $N$ is not necessarily local. The algebra $\mathcal{N}$ and the family of algebras $\{N_t\}$ are the output of this method. 
\end{enumerate}
\end{meth}

By Theorem \ref{thm formal deformation} the family $\{N_t\}$ is a deformation of $N$ and by Proposition \ref{iso} it holds that $A \in \{N_t\}$.

Changing the chosen generators $a, b$ can result in different algebras $N_0$ that have $A$ as an associative deformation.

\subsection{Notes on the method}
We give some notes before the examples: 
\begin{itemize}
    \item In step (1) we are trying to capture the behaviour of the elements in $A$, but in a way that is controlled by $t$. This enables us to make use of a presentation while still having access to our deformation parameter $t$. 
    \item We note that since $a, b$ generate $A$ some linear combination of products of $a, b$ will be the identity. In step (2) it is important we find a relation on $x, y, t$ and $1 \in A$. 
    \item In step (3) we discard $A$ entirely. However, because of the relations $p_i$, $x$ and $y$ ``remember" that they came from $A$ and that they form a generating set. This step results in something subtle: the monomials $x, y \in N$ are no longer dependent on $t$ (and thus do not vanish when we set $t = 0$) but their products maintain their dependence on $t$. 
    We have loosened our grip on $x, y$ just enough for the deformation to take place.
    \item We note that our deformation will be associative since the multiplication of $N_t$ will inherit from the associative multiplication $*$ of the algebra $\mathcal{N}$. 
    \item This algorithm is easily generalised to algebras that require a larger generating set. Suppose a finite dimensional $\C$-algebra $A$ has $n$ generators $\{a_1, \dots, a_n\}$. Then in step (1) we would define $x_i := t a_i$ for $1 \leq i \leq n$ and proceed as described in Method \ref{meth}. The resulting family of algebras will also be a flat and associative deformation of $A$. 
\end{itemize}

\subsection{Examples}

Here we shall work through some examples. We shall start with a simple example where $A = \C \oplus \C$ and then give a more complicated one where $A = \mat_2(\C)$, the algebra of $2 \times 2$ matrices over $\C$.  
Practically it can be useful to fix a basis for the algebra $A$ as this helps find the decompositions in step (2). 

\begin{example}
 We consider $A = \C \oplus \C$ and fix $a = (i, 0), \; b = (0, 1)$. Clearly $\C$-linear combinations of $a$ and $b$ span $A$ so our basis is just $\{a, b \}$. 
We write $\1 := (1, 1)$. Now we consider $A[t]$ and define: 
\begin{align*}
    &x := ta = (ti, 0), \\ 
    &y := tb = (0, t).
\end{align*}
We calculate:
\begin{align*}
    &x^2 = t^2 a^2 = (-t^2, 0) = tix, \\ 
    &xy = t^2ab = (0, 0) = 0, \\ 
    &yx = t^2ab = (0, 0) = 0, \\ 
    &y^2 = t^2b^2 = (0, t^2) = ty,\\ 
    &t \1 = -ix + y.
\end{align*}
Thus we have relations: 
\begin{align*}
    p_1 &= x^2 - tix = 0, \\ 
    p_2 &= xy = 0, \\ 
    p_3 &= yx = 0, \\ 
    p_4 &= y^2 - ty = 0, \\ 
    p_5 &= t \1 -(-ix + y).
\end{align*}
    
Since $xy = yx = 0$ and squares of $x, y$ can be reduced,  we have enough relations to decompose any large product into only multiples of $x, y$ and powers of $t$. 

Thus we present: 
\begin{equation*}
    N := {\C[x,y][t]}/{\left<p_1, p_2, p_3, p_4, p_5\right>}.
\end{equation*}
We observe: 
\begin{equation*}
    N_0 = {\C[x, y]}/{\left<x^2, xy, yx, y^2, -ix + y \right>},
\end{equation*}
 and 
\begin{equation*}
    N_1 = {\C[x, y]}/{\left<x^2 - ix, xy, yx, y^2 - y, 1 + ix -y  \right>.}
\end{equation*}
We see that $x, y$ behave exactly as $a, b \in A$ and by Proposition \ref{iso}. It is easily checked that the isomorphism is given by: 
\begin{align*}
    \phi: N_1 &\to A \\
          1 &\mapsto \1 \\ 
          x &\mapsto (i, 0) \\ 
          y &\mapsto (0, 1).
\end{align*}
Thus we have produced an algebra $N_0$ that has $A = \C \oplus \C$ as a deformation. 
\end{example}

\begin{example}
We consider $A = \mat_2(\C)$ and fix: 
\begin{equation*}
    a = \begin{pmatrix}
           1 & 0 \\
           0 & -1
         \end{pmatrix}, \quad b = \begin{pmatrix}
           0 & 1 \\
           1 & 0
         \end{pmatrix}.
\end{equation*}
We denote by $\1$ the multiplicative unit in $A$. Next we compute:
\begin{align*}
    &\begin{pmatrix}
           1 & 0 \\
           0 & 0
         \end{pmatrix} = \frac{1}{2}(a + b^2) = e_1, \\ 
    &\begin{pmatrix}
           0 & 1 \\
           0 & 0
         \end{pmatrix} = \frac{1}{2}(ab + b) = e_2, \\ 
    &\begin{pmatrix}
           0 & 0 \\
           1 & 0
         \end{pmatrix} = \frac{1}{2}(b - ab) = e_3, \\ 
    &\begin{pmatrix}
           0 & 0 \\
           0 & 1
         \end{pmatrix} = \frac{1}{2}(b^2 - a) = e_4
\end{align*}
and thus $a, b$ generate $A$ as an algebra.
Now we consider $A[t]$ and define:
\begin{align*}
    &x := ta, \\ 
    &y := tb.
\end{align*}
We compute:
\begin{align*}
    &x^2 = t^2 \begin{pmatrix}
           1 & 0 \\
           0 & 1 
         \end{pmatrix} = t^2 \1, \\ 
    &xy = t^2 \begin{pmatrix}
           0 & 1 \\
           -1 & 0
         \end{pmatrix} = t^2(e_2 - e_3), \\ 
    &yx = t^2 \begin{pmatrix}
           0 & -1 \\
           1 & 0
         \end{pmatrix} = t^2(e_3 - e_2), \\ 
    &y^2 = t^2 \begin{pmatrix}
           1 & 0 \\
           0 & 1 
         \end{pmatrix} = t^2 \1 .
\end{align*}
We have obtained the relations: 
\begin{align*}
    &p_1 = x^2 - t^2 \1, \\ 
    &p_2 = x^2 - y^2 = 0, \\ 
    &p_3 = xy + yx = 0, \\ 
    &p_4 = y^2 - t^2 \1.
\end{align*}
Now we consider larger products to produce more relations: 
\begin{align*}
    &x^3 = t^3 \begin{pmatrix}
           1 & 0 \\
           0 & -1
         \end{pmatrix} = t^2x, \\
    &y^3 = t^3 \begin{pmatrix}
           0 & 1 \\
           1 & 0
         \end{pmatrix} = t^2y.
\end{align*}
Hence we have also obtained: 
\begin{align*}
    &p_5 = x^3 - t^2x = 0, \\ 
    &p_6 = y^3 - t^2y = 0.
\end{align*}
These allow us to derive the following: 
\begin{align*}
    &x^2y = y^3 = yx^2 = y^3 = t^2y, \\ 
    &y^2x = x^3 = xy^2 = x^3 = t^2x,
\end{align*}
which are enough to reduce an arbitrary product of $x, y$. Thus we present: 
\begin{equation*}
    N := {\C[x,y][t]}/{\left<p_1, p_2, p_3, p_4, p_5, p_6 \right>}.
\end{equation*}

We observe: \[N_0 = {\C[x, y]}/{\left< x^2, y^2, xy + yx \right>},\] 
and \[N_1 = {\C[x, y]}/{\left< x^2 -1, y^2 -1, xy + yx, x^3 -x, y^3 -y \right>}.\]
By Proposition \ref{iso} we have $N_1 \cong A$. It is easily checked that the isomorphism is given by: 
\begin{align*}
    \phi: N_1 &\to A \\
          1 &\mapsto \1 \\ 
          x &\mapsto \begin{pmatrix}
           1 & 0 \\
           0 & -1
         \end{pmatrix} \\ 
          y &\mapsto \begin{pmatrix}
           0 & 1 \\
           1 & 0
         \end{pmatrix}.
\end{align*}
Thus we have produced an algebra $N_0$ that has $A = \mat_2(\C)$ as a deformation. 
\end{example} 

\section{Deformations to $A$}\label{sec:method}
We now consider the general case in which $A$ is an associative finite dimensional $\C$-algebra generated by some elements $\sum_{i=1}^n a_i, \sum_{i=1}^n b_i$ for some $a_i,b_i\in A$.


\subsection{Specification at $t=0$}

In this section we define a multiplication on $\C[x,y][t]$ which gives the algebra $N$ at specification $t=0$.


\begin{prop}\label{basis}
    Let $\C[t]$ denote the polynomial ring in variable $t$.  Further, assume that for every $r\in A$ there exists $i=i(r)$ such that $rt^i\in \im f$. There exist elements $q_{1}, q_{2}, \dots , q_{n}\in {\mathbb C}[x,y][t]$ such that for every $k$ we have
    
    \begin{equation}\label{eq:basis}
        f(p_k)\in \sum_{i=1}^{n}\C[t] f(q_{i}).
    \end{equation}
    Moreover $n=\dim A.$ 
    
\end{prop}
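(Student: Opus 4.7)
The plan is to identify $\im f$ as a $\C[t]$-submodule of $A[t]$ of rank exactly $n = \dim A$, and then use the fact that $\C[t]$ is a PID (so submodules of the finitely generated free module $A[t]$ are themselves free of rank at most $n$) to produce a $\C[t]$-basis whose elements can be pulled back along $f$ to give the desired $q_i$'s.

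First I would note that $f(t) = t$ makes $f$ a $\C[t]$-algebra homomorphism, so $\im f$ coincides with the $\C[t]$-submodule $M := \sum_{k \geq 1} \C[t]\, f(p_k)$ of $A[t]$, since the monomials $p_1, p_2, \dots$ span $\C[x,y][t]$ over $\C[t]$. Fixing a $\C$-basis $e_1, \dots, e_n$ of $A$ makes $A[t]$ free of rank $n$ over $\C[t]$, so $M$ is free of rank at most $n$. To pin the rank at exactly $n$, I would invoke the standing hypothesis: for each $j$ there is some $i(j)$ with $e_j t^{i(j)} \in \im f = M$, and after inverting $t$ these elements become the $\C(t)$-basis $e_1, \dots, e_n$ of $A(t)$, so the set $\{e_j t^{i(j)}\}_{j=1}^n$ is $\C[t]$-linearly independent in $M$. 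Hence $M$ has rank exactly $n$.

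With $M$ identified as a free $\C[t]$-module of rank $n$, I would pick any $\C[t]$-basis $m_1, \dots, m_n$ of $M$. Each $m_i$ lies in $\im f$, so I can choose $q_i \in \C[x,y][t]$ with $f(q_i) = m_i$. Then for every $k$,
\[
f(p_k) \in M = \sum_{i=1}^n \C[t]\, f(q_i),
\]
which is the required inclusion, and $n = \dim A$ holds by construction.

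The only non-trivial step is the rank computation: showing that $M$ has full rank $n$ is precisely what the hypothesis about $rt^{i(r)} \in \im f$ is designed to deliver, since without it one could not rule out $M$ sitting inside a proper $\C[t]$-direct summand of $A[t]$. Once that is in place, the conclusion is a clean application of the structure theorem for finitely generated modules over a PID, and no delicate combinatorial analysis of the specific monomials $p_k$ or of the relations on $x, y$ is needed.
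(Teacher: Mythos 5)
Your proof is correct, and it reaches the same conclusion by a more abstract route than the paper. You observe that $f(t)=t$ makes $\im f$ a $\C[t]$-submodule of the free rank-$n$ module $A[t]$ (the monomials $p_k$ being a $\C[t]$-spanning set of $\C[x,y][t]$), invoke the fact that over the PID $\C[t]$ such a submodule is free of rank at most $n$, and use the standing hypothesis $e_jt^{i(j)}\in\im f$ to force the rank up to exactly $n$ by passing to $\C(t)$; any $\C[t]$-basis of $\im f$ then pulls back along $f$ to the required $q_i$. The paper instead proves this freeness by hand: it builds the $f(q_i)$ in echelon form, choosing for each basis vector $e_{j_m}$ of $A$ an element of $\im f$ whose leading term is $e_{j_m}t^{k_m}$ with $k_m$ minimal, and then runs a division-algorithm reduction to express an arbitrary element of $\im f$ as a $\C[t]$-combination of the $f(q_i)$ --- in effect the constructive proof of the very PID fact you cite, specialised to this situation. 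Your version is shorter and cleaner; the paper's version additionally produces a triangular basis with controlled leading powers of $t$, but the subsequent results in Section 3 only ever use the two properties your basis also has, namely that the $f(q_i)$ are $\C[t]$-linearly independent and $\C[t]$-span $\im f$, so your argument could be substituted without disturbing the rest of the development.
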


\begin{proof}
We let $\{e_1,\dots, e_n\}$ be a basis of $A$. Let $k_1$ be the smallest possible integer such that for some integer $j_1$ we have
\[e_{j_1}t^{k_1}+\sum_{\substack{i=1 \\ i\neq j_1}}^n\alpha_i(t)e_i \in \im f\]

for some $\alpha_i(t)\in \C[t]$. Let $k_2$ be the smallest possible integer such that for some integer $j_2\neq j_1$ we have
\[e_{j_2}t^{k_{2}}+\sum_{\substack{i=1 \\ i\neq j_1, j_2}}^n\alpha'_i(t)e_i \in \im f\]
for some $\alpha'_i(t)\in \C[t]$. Continuing in this way, we define a basis 

$\{q_{1},\dots, q_{n}\}$ such that
\[
    f(q_{i}) = e_{j_m}t^{k_{m}}+\sum_{\substack{i=1 \\ i\neq j_1, \dots, j_m}}^n\alpha_i(t)e_i.
\]

Now notice that $f(q_1), f(q_2),\dots, f(q_n)$ are linearly independent over $\C$ as $e_1, e_2, \dots, e_n$ are linearly independent over $\C$.

We now prove that $f(q_{1})+t\im f,\dots, f(q_{n})+t\im f $ span the $\C$-algebra $\frac{\im f}{t\im f}$ over $\C$, and hence they are a basis for $\frac{\im f}{t\im f}$. 
Let $z_1\in \im f$ and write 
\[z_1=\sum_{i=1}^n r_i(t) e_{j_i}t^{s_i}\]
for some $r_i\in \C[t] - t\C[t]$ and some $j_i,s_i\in \N$. 

Notice that for the smallest $i$ such that $r_i(t)\neq 0$, $s_i\geq k_i$ for $k_i$ as above. Let
\[z_2 = z_1 - f(q_{1})t^{s_1-k_1}r_1(t).\]
 Note that we have
\[z_2\in \sum_{i=2}^n \C[t]e_{j_i}\]
so we can write
\[z_2 = \sum_{i=2}^n r^2_i(t) e_{j_i}t^{s^2_i}\]
for some $r^2_i(t)\in \C[t]-t\C[t]$ and $s^2_i\in \N$. Notice that $q_2\in \im f$. Now let 
\[z_3 = z_2 - f(q_{2})t^{s^2_2-k_2}r^2_2(t).\]

Continuing in this way we eventually arrive at $q_{n+1}=0.$ Hence by

 summing the following equations 
\begin{align*}
    0 &= z_n - f(q_{n})t^{s^n_n-k_n}r^n_n(t),\\
    & \vdots \\
    z_3 &= z_2 - f(q_{2})t^{s^2_2-k_2}r^2_2(t)
\end{align*}
 and 
\[z_2 = z_{1}-f(q_{1})t^{s_1-k_1}r_1(t)\]
we arrive at 
\[z_1=\sum_{l=1}^n f(q_{l})t^{s'_l-k_l}r^{l}_l(t)\in \sum_{l=1}^n\C[t]f(q_{l})\]
for some $s'_l\in\N$ as required.
\end{proof}

\textbf{Remark.} Notice that for $k\in \N$ and $\alpha_j\in \C[t]$ \[f(p_k)-\sum_{j=1}^n\alpha_j f(q{_j})=0\]
if and only if
\[p_k -\sum_{j=1}^n\alpha_j q_{j}\in I.
\]

\begin{prop}\label{prop:rel}
    For any $k, m\in\{1,2,\dots, n\}$ there exist $\zeta _{i,k}\in \mathbb C$, $\xi _{i,k}(t)\in {\mathbb C}[t]$ such that 
    \[p_{k}-\sum_{i=1}^{n}(\zeta _{i,k}q_{i}+t\xi_{i,k}(t)q_{i})\in I.\]
    In particular, there exist $\zeta _{i,k,m}\in \mathbb C$, $\xi _{i,k,m}(t)\in {\mathbb C}[t]$ such that 
\[q_{k}q_{m}-\sum_{i=1}^{n}(\zeta _{i,k,m}q_{i}+t\xi_{i,k,m}(t)q_{i})\in I.\]
\end{prop}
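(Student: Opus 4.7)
The plan is to bootstrap off Proposition~\ref{basis}, which already places $f(p_k)$ inside $\sum_{i=1}^n \C[t]\, f(q_i)$, and then to separate each $\C[t]$-coefficient into its value at $t=0$ (a complex number) and its $t$-divisible remainder. The remark immediately preceding the statement will then let us transfer the resulting identity from $\im f$ back up to $\C[x,y][t]$ modulo $I$.

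First, I would apply Proposition~\ref{basis} to the monomial $p_k$ to obtain polynomials $\beta_{i,k}(t) \in \C[t]$ satisfying
\[
f(p_k) = \sum_{i=1}^n \beta_{i,k}(t)\, f(q_i).
\]
Every $\beta_{i,k}(t) \in \C[t]$ admits the unique splitting $\beta_{i,k}(t) = \zeta_{i,k} + t\,\xi_{i,k}(t)$ with $\zeta_{i,k} := \beta_{i,k}(0) \in \C$ and $\xi_{i,k}(t) \in \C[t]$. Substituting this back yields
\[
f\!\left(p_k - \sum_{i=1}^n\bigl(\zeta_{i,k} q_i + t\,\xi_{i,k}(t)\, q_i\bigr)\right) = 0,
\]
and the remark between Propositions~\ref{basis} and~\ref{prop:rel} gives the claimed membership in $I$.

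For the second assertion I would reduce it to the first. Since $q_k,q_m \in \C[x,y][t]$, the product $q_k q_m$ is a finite $\C[t]$-linear combination of the monomial basis $\{p_l\}$, say $q_k q_m = \sum_l \gamma_l(t)\, p_l$. Applying $f$ and using Proposition~\ref{basis} on each $f(p_l)$ places $f(q_k q_m)$ inside $\sum_i \C[t] f(q_i)$, i.e.\ there exist $\delta_{i,k,m}(t) \in \C[t]$ with $f(q_k q_m) = \sum_{i=1}^n \delta_{i,k,m}(t)\, f(q_i)$. Splitting $\delta_{i,k,m}(t) = \zeta_{i,k,m} + t\,\xi_{i,k,m}(t)$ exactly as before and invoking the same remark delivers the second membership in $I$.

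The argument is essentially a routine unwinding; I do not expect any serious obstacle. The only point that deserves care is the bookkeeping of where coefficients live: the canonical decomposition $\beta(t) = \beta(0) + t\cdot\frac{\beta(t)-\beta(0)}{t}$ in $\C[t]$ is what guarantees that the $\zeta$'s land in $\C$ (rather than merely in $\C[t]$) while the remaining coefficients factor through $t$, which is precisely the shape of the conclusion. Everything else is just applying Proposition~\ref{basis} and converting equalities in $\im f$ into congruences modulo $I = \ker f$.
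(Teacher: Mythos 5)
Your proposal is correct and follows essentially the same route as the paper: pull the spanning statement of Proposition~\ref{basis} back through $f$ (equivalently, via the remark identifying equalities in $\im f$ with congruences modulo $I=\ker f$), then split each coefficient $\sigma(t)\in\C[t]$ as $\sigma(0)+t\cdot(\text{rest})$. Your treatment of the ``in particular'' clause is in fact slightly more explicit than the paper's, which leaves the reduction of $q_kq_m$ to a $\C[t]$-combination of monomials implicit.
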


\begin{proof}

    For each element $p_{k} \in {\mathbb C}[x,y][t]$ we have 
\[p_{k}\in \sum_{i=1}^{n} {\mathbb C}[t]q_{i}+I.\]
 This follows by taking $f^{-1}$ of equation (\ref{eq:basis}). Hence, for each $k\in \N$ there exist $\sigma _{i,k}\in \mathbb C [t]$ such that 
\[p_{k}-\sum_{i=1}^{n}\sigma_{i,k}q_{i}\in I,\]
 where $I=\ker f$. Notice that we can write  $\sigma _{i,k}(t)=\zeta _{i,k} +  t\xi_{i,k}(t)$ for some $\zeta _{i,k}\in \mathbb C$, $\xi _{i,k}(t)\in {\mathbb C}[t]$, as to separate the terms dependent on $t$. The above then becomes
\[p_{k}-\sum_{i=1}^{n}(\zeta _{i,k}q_{i}+t\xi_{i,k}(t)q_{i})\in I.\]

\end{proof}

Reasoning similarly as in the proof of Proposition \ref{basis} we get the following corollary.

\begin{cor} \label{1}
Suppose $\sum_{i=1} ^n \alpha _{i}q_{i}\in I$ for some $\alpha _{i}\in {\mathbb C}[t]$. Then $f(\sum_{i} ^n \alpha _{i}q_{i})=0$ and consequently  $\alpha _{i}=0$ for every $i\in\{1,2,\dots, n\}$.

 Moreover, all elements of $I$ are $\C[t]$-linear combinations
 of elements 
\[p_{k}-\sum_{i=1}^{n}(\zeta _{i,k}q_{i}+t\xi_{i,k}(t)q_{i})\]
for some $k\in \N, \zeta _{i,k}\in \mathbb C$, $\xi _{i,k}(t)\in {\mathbb C}[t]$.
\end{cor}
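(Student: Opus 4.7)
The plan is to derive both assertions from the triangular structure of $f(q_1),\dots,f(q_n)$ built in the proof of Proposition \ref{basis}, combined with Proposition \ref{prop:rel}. The first assertion is a linear independence statement for the $q_i$ modulo $I$, and the second is a spanning statement identifying $I$ with the $\C[t]$-span of the distinguished elements furnished by Proposition \ref{prop:rel}. I will handle the two claims in turn, with the second relying on the first.

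For the first claim I would take $\sum_{i=1}^n \alpha_i q_i \in I$ and apply $f$. Since $f$ fixes $t$, it acts as the identity on $\C[t]\subseteq \C[x,y][t]$, so this yields $\sum_{i=1}^n \alpha_i f(q_i) = 0$ in $A[t]$. The construction in the proof of Proposition \ref{basis} gives $f(q_i) = e_{j_i}t^{k_i} + \sum_{l \notin \{j_1,\dots,j_i\}}\alpha_l(t)e_l$; in other words, $e_{j_1}$ appears only in $f(q_1)$, while $e_{j_2}$ may appear in $f(q_1)$ and $f(q_2)$ but not in later $f(q_i)$'s, and so on. Extracting the $e_{j_1}$-coefficient gives $\alpha_1 t^{k_1}=0$, forcing $\alpha_1 =0$; then the $e_{j_2}$-coefficient gives $\alpha_2 t^{k_2}=0$, forcing $\alpha_2=0$; continuing by induction, $\alpha_i=0$ for every $i$.

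For the second claim, I would take any $z \in I$. Since the monomials $p_k$ form a $\C[t]$-basis of $\C[x,y][t]$, we can expand $z = \sum_k \beta_k p_k$ with only finitely many nonzero $\beta_k \in \C[t]$. By Proposition \ref{prop:rel}, for each $k$ the element
\[
r_k := p_k - \sum_{i=1}^n \bigl(\zeta_{i,k} q_i + t\xi_{i,k}(t) q_i\bigr)
\]
lies in $I$. Substituting $p_k = r_k + \sum_i(\zeta_{i,k}+t\xi_{i,k}(t))q_i$ and collecting terms gives
\[
z = \sum_k \beta_k r_k + \sum_{i=1}^n \gamma_i q_i, \qquad \gamma_i := \sum_k \beta_k\bigl(\zeta_{i,k}+t\xi_{i,k}(t)\bigr)\in \C[t].
\]
Because both $z$ and $\sum_k \beta_k r_k$ lie in $I$, so does $\sum_i \gamma_i q_i$. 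By the first claim every $\gamma_i$ vanishes, whence $z = \sum_k \beta_k r_k$ is the required $\C[t]$-linear combination.

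The main obstacle, modest as it is, will be the triangular extraction in the first claim: one must remember that the indices $j_1,j_2,\dots$ were chosen in exactly the order needed for successive peeling, so that the coefficient of $e_{j_i}$ depends only on $\alpha_1,\dots,\alpha_i$ and forces $\alpha_i=0$ once the earlier ones have been eliminated. After that, the second claim is essentially bookkeeping, provided one is comfortable treating the enumerated monomials $p_k$ as a $\C[t]$-basis of $\C[x,y][t]$.
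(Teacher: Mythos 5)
Your proof is correct and follows exactly the route the paper intends: the paper gives no explicit proof beyond ``reasoning similarly as in Proposition \ref{basis}'', and your triangular peeling of the $e_{j_i}$-coefficients (giving $\C[t]$-linear independence of the $f(q_i)$) together with the substitution of $p_k = r_k + \sum_i(\zeta_{i,k}+t\xi_{i,k}(t))q_i$ is precisely the omitted argument. No gaps.
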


\begin{notation}\label{not:N}
We denote by $J$ be the set consisting of $\C$-linear combinations of elements 
\begin{equation}\label{eq:J}
    p_{k}-\sum_{i=1}^{n}\zeta _{i,k}q_{i}\in {\mathbb C}[x,y],
\end{equation}
where $\zeta _{i,k}$ are as in Proposition \ref{prop:rel}.
 Let $\langle J\rangle $ be the ideal of ${\mathbb C}[x,y]$ generated by elements from $J$. Further, let $N$ be the quotient algebra $ \C[x,y]/\langle J\rangle$. 
 \end{notation}

\begin{cor}\label{2}
We have $e\in J$ if and only if  $e+e'\in I$ for some $e'\in t{\mathbb C}[x,y][t]$. In particular $J\subseteq I+t{\mathbb C}[x,y][t]$, and hence $\langle J\rangle \subseteq I+t{\mathbb C}[x,y][t]$.  
\end{cor}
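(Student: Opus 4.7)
The approach is a direct biconditional argument: Proposition \ref{prop:rel} supplies the forward direction, Corollary \ref{1} supplies the converse, and the identity $\C[x,y]\cap t\C[x,y][t]=\{0\}$ joins the two halves. For the forward direction I would write an arbitrary $e\in J$ as a $\C$-linear combination $e=\sum_k c_k g_k$ of generators $g_k:=p_k-\sum_{i=1}^n\zeta_{i,k}q_i\in\C[x,y]$. By Proposition \ref{prop:rel}, $g_k-\sum_i t\xi_{i,k}(t)q_i\in I$ for each $k$; setting $e':=-\sum_k c_k\sum_i t\xi_{i,k}(t)q_i\in t\C[x,y][t]$ then yields $e+e'\in I$, since $I$ is closed under $\C$-linear combinations.

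For the converse, suppose $e\in\C[x,y]$ and $e+e'\in I$ with $e'\in t\C[x,y][t]$. Corollary \ref{1} provides a presentation
\[
e+e' \;=\; \sum_k \alpha_k(t)\,r_k, \qquad r_k := p_k-\sum_i\zeta_{i,k}q_i-\sum_i t\xi_{i,k}(t)q_i,
\]
with $\alpha_k(t)\in\C[t]$. Splitting each $\alpha_k(t)=\alpha_k(0)+t\beta_k(t)$ and collecting every summand that carries an explicit factor of $t$ into a single element $w\in t\C[x,y][t]$, the right-hand side becomes $\sum_k\alpha_k(0)g_k+w$. Rearranging gives $e-\sum_k\alpha_k(0)g_k=w-e'\in t\C[x,y][t]$, while the left-hand side lies in $\C[x,y]$. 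Since $\C[x,y]\cap t\C[x,y][t]=\{0\}$, both sides vanish and $e=\sum_k\alpha_k(0)g_k\in J$.

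The two ``in particular'' inclusions are then immediate: the forward direction already implies $J\subseteq I+t\C[x,y][t]$, and since $I+t\C[x,y][t]$ is a two-sided ideal of $\C[x,y][t]$ containing $J$, it also contains the ideal $\langle J\rangle$ generated by $J$. The main bookkeeping subtlety sits in the converse: one must confirm that isolating the $t$-free part of the Corollary \ref{1} expansion really lands inside $J$, which relies on the generators $g_k$ themselves lying in $\C[x,y]$, as built into Notation \ref{not:N}.
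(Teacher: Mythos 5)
Your proof is correct and is essentially the intended argument: the paper states Corollary \ref{2} without proof, and your deduction --- the forward direction from Proposition \ref{prop:rel}, the converse from the presentation of $I$ in Corollary \ref{1} together with the splitting $\alpha_k(t)=\alpha_k(0)+t\beta_k(t)$ and $\C[x,y]\cap t\C[x,y][t]=\{0\}$, and the ideal-closure of $I+t\C[x,y][t]$ for the final inclusion --- is exactly what the authors leave implicit. Your observation that the biconditional must be read with $e\in\C[x,y]$ (as forced by Notation \ref{not:N}) is also the correct reading of the statement.
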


\begin{prop}\label{3} The dimension of $N$ equals $n$. Moreover,  elements $q_{k}+J\in N$ are a basis of $N$ as a $\C$-vector space.
\end{prop}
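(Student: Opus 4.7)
The plan is to prove separately that the classes $\{q_k+\langle J\rangle\}_{k=1}^n$ span $N$ as a $\C$-vector space and are $\C$-linearly independent in $N$; combining these two claims gives both $\dim_\C N=n$ and the claimed basis.

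For spanning, I will read off the defining relations of $J$ directly. By Notation \ref{not:N}, for every monomial $p_k\in \C[x,y]$ the element $p_k-\sum_{i=1}^n\zeta_{i,k}q_i$ lies in $J\subseteq\langle J\rangle$, so $p_k\equiv \sum_{i=1}^n\zeta_{i,k}q_i\pmod{\langle J\rangle}$. As the monomials $\{p_k\}$ form a $\C$-basis of $\C[x,y]$, taking $\C$-linear combinations shows that every class in $N$ is a $\C$-combination of $q_1+\langle J\rangle,\dots, q_n+\langle J\rangle$, whence $\dim_\C N\le n$.

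For linear independence, I will reduce the question to Corollary \ref{1}. Suppose $\sum_{i=1}^n\alpha_i q_i\in \langle J\rangle$ with $\alpha_i\in \C$. By Corollary \ref{2}, $\langle J\rangle\subseteq I+t\C[x,y][t]$, so working inside $\C[x,y][t]$ I can write $\sum_i\alpha_i q_i=j+tg$ for some $j\in I$ and $g\in \C[x,y][t]$. Proposition \ref{prop:rel} tells us that every monomial $p_k$ satisfies $p_k\equiv \sum_i\sigma_{i,k}(t)q_i\pmod{I}$ for appropriate $\sigma_{i,k}(t)\in \C[t]$; multiplying by arbitrary powers of $t$ and extending $\C$-linearly, every element of $\C[x,y][t]$ is congruent modulo $I$ to a $\C[t]$-combination of the $q_i$. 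Applying this to $g$ gives $g=\sum_i\gamma_i(t)q_i+j'$ for some $\gamma_i(t)\in \C[t]$ and $j'\in I$; substituting into $\sum_i\alpha_i q_i=j+tg$ and rearranging yields
\[
\sum_{i=1}^n\bigl(\alpha_i-t\gamma_i(t)\bigr)q_i \;=\; j+tj' \;\in\; I.
\]
Now Corollary \ref{1} forces $\alpha_i-t\gamma_i(t)=0$ in $\C[t]$ for every $i$, and since $\alpha_i\in \C$ while $t\gamma_i(t)$ has no constant term, this gives $\alpha_i=0$ for every $i$.

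I expect the main technical subtlety to lie in the passage between $\C[x,y]$ and $\C[x,y][t]$: the ideal $\langle J\rangle$ is defined in $\C[x,y]$, whereas Corollaries \ref{1} and \ref{2} are statements inside $\C[x,y][t]$, so one has to embed $\C[x,y]\hookrightarrow\C[x,y][t]$ consistently and track which $q_i$ (in $\C[x,y][t]$) versus their images in $\C[x,y]$ appear in each step. Once this bookkeeping is done carefully, the argument reduces cleanly to the $\C[t]$-linear independence of the $q_i$ modulo $I$ provided by Corollary \ref{1}.
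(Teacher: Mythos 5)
Your proposal is correct and follows essentially the same route as the paper: spanning is read off from the relations (\ref{eq:J}), and linear independence is obtained by pushing the hypothesis into $\C[x,y][t]$ via Corollary \ref{2}, rewriting the $t\C[x,y][t]$ part modulo $I$ as a $t\C[t]$-combination of the $q_i$, and invoking Corollary \ref{1}. Your version just makes the substitution step (writing $g=\sum_i\gamma_i(t)q_i+j'$ and comparing constant terms) slightly more explicit than the paper's.
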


\begin{proof}
    
 Notice that expression (\ref{eq:J}) implies that 
 \[{\mathbb C}[x,y]\subseteq \sum_{k=1}^{n}\C q_{k}+\langle J\rangle,\] and hence  
$\{q_{k}+\langle J\rangle\mid k\in \{1,2, \ldots ,n\}\}$  span $N$ as a $\C$-vector space.
 Therefore,  the dimension of $N$ does not exceed $n$.

We now show that the elements $q_{k}+\langle J\rangle\in N$ for $k\in \{1,2, \ldots ,n\}$ are linearly independent over $\mathbb C$.
Suppose on the contrary that we have
\[\sum_{i=1}^{n}\xi_{i}q_{i}\in \langle J\rangle \]
for some $\xi _{i}\in \mathbb {C}$. By Corollary \ref{2} and Proposition \ref{basis} we have  \[\langle J\rangle \subseteq I+t{\mathbb C}[x,y][t]\subseteq I+t\sum_{i=1}^{n}{\mathbb C}[t]q_{i}.\]
 It follows that there is $e'\in t\sum_{i=1}^{n}{\mathbb C}[t]q_{i}$ such that $\sum_{i=1}^{n}\xi _{i}q_{i}-e'\in I$, contradicting Corollary \ref{1}. 
\end{proof}

 Observe now that the following holds:
\begin{prop}\label{7} We can present $N$ as an $\mathbb C$-algebra 
 with generators $d_{1},\dots, d_n$, which span $N$ as a $\C$-vector space, subject to relations 
\[ d_{k}d_{m}=\sum_{i=1}^{n}\zeta _{i,k,m}d_{i}\]
where $\zeta_{i,k,m}$ are as in Proposition \ref{prop:rel}.

\end{prop}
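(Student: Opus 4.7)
The plan is to identify the generators $d_k$ explicitly with the basis elements produced in Proposition~\ref{3}, read off the multiplication relations from Proposition~\ref{prop:rel}, and then verify that the presentation is complete via a dimension count.

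First I would set $d_k := q_k + \langle J \rangle \in N$ for $k \in \{1, \dots, n\}$. Proposition~\ref{3} already asserts that these $n$ elements form a $\C$-basis of $N$, so in particular they span $N$ as a $\C$-vector space and are $\C$-linearly independent. This disposes of the generation and basis claim without further work.

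Next I would establish the multiplicative relations. The second part of Proposition~\ref{prop:rel} supplies scalars $\zeta_{i,k,m} \in \C$ and polynomials $\xi_{i,k,m}(t) \in \C[t]$ with
\[
q_k q_m - \sum_{i=1}^{n}\zeta_{i,k,m}q_{i} - \sum_{i=1}^{n} t\,\xi_{i,k,m}(t)\,q_{i} \in I.
\]
The tail $\sum_{i} t\,\xi_{i,k,m}(t)\,q_{i}$ lies in $t\,\C[x,y][t]$, so Corollary~\ref{2} (in the form $e \in J$ whenever $e + e' \in I$ with $e' \in t\C[x,y][t]$) forces $q_{k}q_{m} - \sum_{i}\zeta_{i,k,m}q_{i}$ to lie in $\langle J\rangle$. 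Passing to the quotient therefore yields $d_{k}d_{m} = \sum_{i=1}^{n}\zeta_{i,k,m}d_{i}$ inside $N$, which are exactly the relations in the statement.

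To conclude that these are a full presentation, I would let $B$ denote the abstract $\C$-algebra with generators $d_{1},\dots,d_{n}$ subject only to the relations $d_{k}d_{m} = \sum_{i}\zeta_{i,k,m}d_{i}$. Because any product of generators collapses under these relations to a $\C$-linear combination of the $d_{i}$, we have $\dim_{\C} B \leq n$. The assignment $d_{k}\mapsto q_{k}+\langle J\rangle$ extends to a $\C$-algebra homomorphism $B \to N$ that is surjective (its image contains the basis from Proposition~\ref{3}), and by that same proposition $\dim_{\C} N = n$. A surjection between $\C$-algebras of equal finite dimension is an isomorphism, establishing the presentation.

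The main delicate point is the step that invokes Corollary~\ref{2}: strictly speaking the $q_{i}$ live in $\C[x,y][t]$ and may carry $t$-dependence, so one has to be careful that $q_{k}q_{m} - \sum_{i}\zeta_{i,k,m}q_{i}$ is being handled as an element of $\C[x,y]$ (modulo $t\C[x,y][t]$) before applying the corollary. Once one absorbs the $t$-parts of the $q_{i}$ into the tail $\sum_{i} t\xi_{i,k,m}(t)q_{i}$, the rest of the argument is a straightforward assembly of Proposition~\ref{3}, Proposition~\ref{prop:rel}, and Corollary~\ref{2} with a dimension count.
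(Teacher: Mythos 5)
Your proposal is correct and follows essentially the same route as the paper: map the free algebra on $d_1,\dots,d_n$ onto $N$ via $d_k\mapsto q_k+\langle J\rangle$, verify that the relations $d_kd_m-\sum_i\zeta_{i,k,m}d_i$ lie in the kernel, and conclude by comparing the dimension bound $\leq n$ for the abstract presentation with $\dim_{\C}N=n$ from Proposition~\ref{3}. The only cosmetic difference is that you get surjectivity from the spanning set of Proposition~\ref{3} (and justify the relations carefully via Corollary~\ref{2}), whereas the paper argues surjectivity by showing $x+\langle J\rangle$ and $y+\langle J\rangle$ lie in the image; both are valid.
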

\begin{proof} 

We let ${\mathbb C}[q_{1}, \ldots , q_{n}]$ be the  subalgebra of ${\mathbb C}[x,y]$ generated by elements $q_{1}, \ldots , q_{n}\in {\mathbb C}[x,y]$. Similarly, let ${\mathbb C}[d_{1}, \ldots , d_{n}]$ be the free $\C$-algebra generated by elements $d_{1}, \ldots , d_{n}$.  Let $\zeta:{\mathbb C}[d_{1}, \ldots , d_{n}]\rightarrow {\mathbb C}[x,y]/\langle J\rangle $ be defined as $\zeta (d_{i})=q_{i}+\langle J\rangle$.

We now show that $x+\langle J\rangle,y+\langle J\rangle\in  \im\zeta$. Notice that since $f(x)\in \im f$ we have
\[f(x)\in \sum_{i=0}^{\infty}{\mathbb C}[t]f(q_i).\]
Hence we have $f(x)=f(x')$ for some $x'\in \sum_{i=0}^{\infty}{\mathbb C}[t]f(q_i). $ Then $x-x'\in I$, and so $x-x''\in J$ for some $x''\in \sum_{i=0}^{\infty}{\mathbb C}q_i.$  Then $x''+\langle J\rangle \in \im \zeta$, and so $x+\langle J\rangle \in \im \zeta$. An analogous argument shows $y+\langle J\rangle \in \im \zeta$.

 Let \[J'=\ker (h )\subseteq {\mathbb C}[d_{1}, \ldots , d_{n}].\]
 Then by the First Isomorphism Theorem for rings ${\mathbb C}[d_{1}, \ldots , d_{n}]/J'$ is isomorphic to 
$ {\mathbb C}[x,y]/\langle J\rangle =N$. 

Let $J''$ be the ideal of ${\mathbb C}[d_{1}, \ldots , d_{n}]$ generated by  elements 
\[ d_{k}d_{m}-\sum_{i=1}^{n}\zeta _{i,k,m}d_{i}\]
where $\zeta_{i,k,m}$ are as in Proposition \ref{prop:rel}. Observe that 
\[\zeta \left (d_{k}d_{m}-\sum_{i=1}^{n}\zeta _{i,k,m}d_{i} \right)=q_{k}q_{m}-\sum_{i=1}^{n}\zeta _{i,k,m}q_{i}+\langle J\rangle =0+\langle J\rangle .\] 
 Therefore $J''\subseteq J'$. It follows that  the dimension of $ {\mathbb C}[d_1,\dots,d_n]/J'$ does not exceed the dimension of $ {\mathbb C}[d_1,\dots, d_n]/J''$. 
 
 We now show that $J'=J''$. Notice that ${\mathbb C}[d_{1}, \ldots , d_{n}]/J''$ is at most $n$-dimensional, since every element can be presented as a linear combination of elements $d_{i}+J''$ for $i=1,2, \ldots, n$. On the other hand 
${\mathbb C}[d_{1}, \ldots , d_{n}]/J'$ is isomorphic to 
 ${\mathbb C}[x,y]/\langle J \rangle $, and hence is $n$-dimensional (by Proposition \ref{3}). Hence by comparing dimensions we get  that 
 the dimension of  $ {\mathbb C}[d_1,\dots, d_n]/J''$ does not exceed the dimension of $ {\mathbb C}[d_1,\dots, d_n]/J'$. Previously we showed that $ {\mathbb C}[d_1,\dots, d_n]/J'$ does not exceed the dimension of $ {\mathbb C}[d_1,\dots, d_n]/J''$. It follows that 
$J'=J''$, as required. 
\end{proof}

We now define a multiplication on $N$ which gives a formal deformation to the algebra $A$.

\begin{thm} \label{thm formal deformation}
Let $d_1,\dots,d_n$ be free generators of the $\C$-algebra $\C[d_1,\dots,d_n]$ and suppose $\zeta _{i,k,m}\in \mathbb C$, $\xi _{i,k,m}(t)\in {\mathbb C}[t]$ are as in Proposition \ref{prop:rel}. Then the multiplication rule
\[d_{k} \ast_t d_{m}=\sum_{i=1}^{n}(\zeta _{i,k,m}d_{i}+t\xi_{i,k,m}(t)d_{i}).\]
 gives a formal deformation such that $\ast_0$ gives the multiplication on an algebra isomorphic to $N$. 
\end{thm}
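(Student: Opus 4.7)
The plan is to define the multiplication $\ast_t$ as the $\C[t]$-bilinear extension of the stated rule on the $\C[t]$-module $M = \bigoplus_{i=1}^n \C[t] d_i$ (which is the underlying module of $N[t]$ via the basis supplied by Proposition \ref{3}), verify associativity, and identify the specialisation at $t = 0$ with $N$ using Proposition \ref{7}. The key device is an injective $\C[t]$-linear map $\Phi : M \to A[t]$ that intertwines $\ast_t$ with the honest product in $A[t]$, so that associativity is inherited from the associative algebra $A[t]$.

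Concretely, I would define $\Phi(d_i) := f(q_i)$ and extend $\C[t]$-linearly. Injectivity is exactly Corollary \ref{1}: if $\sum_{i=1}^n r_i(t) d_i$ lies in $\ker \Phi$, then $\sum_{i=1}^n r_i(t) q_i \in I$, forcing every $r_i(t)$ to vanish. Applying the homomorphism $f$ to the relation of Proposition \ref{prop:rel} gives
\[ f(q_k) f(q_m) = \sum_{i=1}^n \bigl(\zeta_{i,k,m} + t\xi_{i,k,m}(t)\bigr) f(q_i) \]
in $A[t]$. Translated through $\Phi$, this reads $\Phi(d_k)\Phi(d_m) = \Phi(d_k \ast_t d_m)$, and by $\C[t]$-bilinearity on both sides it extends to $\Phi(u \ast_t v) = \Phi(u)\Phi(v)$ for all $u, v \in M$.

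Associativity of $\ast_t$ then follows from associativity in $A[t]$: for any $u, v, w \in M$,
\[ \Phi\bigl((u \ast_t v) \ast_t w\bigr) = \Phi(u)\Phi(v)\Phi(w) = \Phi\bigl(u \ast_t (v \ast_t w)\bigr), \]
and injectivity of $\Phi$ yields $(u \ast_t v) \ast_t w = u \ast_t (v \ast_t w)$. Setting $t = 0$ reduces the rule to $d_k \ast_0 d_m = \sum_{i=1}^n \zeta_{i,k,m} d_i$, which is exactly the presentation of $N$ supplied by Proposition \ref{7}, so $\ast_0$ is the multiplication of an algebra isomorphic to $N$. The Cauchy product rule required by Definition \ref{defdef} is automatic since $\ast_t$ has been built as a $\C[t]$-bilinear extension from the start.

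The main obstacle is bookkeeping rather than substance: the theorem speaks of free generators $d_1, \dots, d_n$ of a $\C$-algebra, but the deformation in fact lives on the $n$-dimensional free $\C[t]$-module they span, and one has to be explicit that $\ast_t$ is defined by $\C[t]$-bilinear extension of the rule on pairs of generators, not by factoring the free algebra by an ideal. Once this framing is fixed, associativity collapses to the one-line transport of associativity from $A[t]$ through the injective intertwiner $\Phi$.
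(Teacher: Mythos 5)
Your proof is correct and rests on the same underlying idea as the paper's: the multiplication $\ast_t$ is transported from the associative product on $\im f \subseteq A[t]$ via the elements $f(q_i)$, with Proposition \ref{prop:rel} supplying the structure constants and Proposition \ref{7} identifying the $t=0$ specialisation with $N$. Your explicit injective intertwiner $\Phi$ (whose injectivity is exactly Corollary \ref{1}) and the resulting one-line transport of associativity make precise what the paper only asserts when it says the relations on the classes $q_i + I$ ``give a multiplication'', so your write-up is, if anything, the more complete of the two.
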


\begin{proof}
    
 Recall relations from Proposition \ref{prop:rel} that for any $k, m\in\{1,2,\dots, n\}$  there exist $\zeta _{i,k,m}\in \mathbb C$, $\xi _{i,k,m}(t)\in {\mathbb C}[t]$ such that 
\begin{equation}\label{eq:relI}
    q_{k}q_{m}-\sum_{i=1}^{n}(\zeta _{i,k,m}q_{i}+t\xi_{i,k,m}(t)q_{i})\in I.
\end{equation}
 
We introduce notation $[q_{i}]:=q_{i}+I$ for elements of ${\mathbb C}[x,y]/I$. Hence we obtain the following relations corresponding to (\ref{eq:relI}) which give the  multiplicative table  on ${\mathbb C}[x,y]/I$. We have
\[[q_{k}] [q_{m}]-\sum_{i=1}^{n}(\zeta _{i,k,m}[q_{i}]+t\xi_{i,k,m}(t)[q_{i}]).\]
 
Notice that these relations give a multiplication
\begin{equation}\label{eq:def0}
  d_{k} \ast_t d_{m}=\sum_{i=1}^{n}(\zeta _{i,k,m}d_{i}+t\xi_{i,k,m}(t)d_{i})  
\end{equation}
for $d_i\in \C[x,y].$ Now recall that $N$ is isomorphic to ${\mathbb C}[d_{1}, \ldots , d_{n}]/J''$ where $J''$ is the ideal of 
${\mathbb C}[d_{1}, \ldots , d_{n}]$ generated by relations 
\[d_{k}d_{m}-\sum_{i=1}^{n}\zeta _{i,k,m}d_{i}.\] 
Therefore by setting $t=0$ in (\ref{eq:def0}) we obtain the multiplication rule for ${\mathbb C}[d_{1}, \ldots , d_{n}]/J''$. 
\end{proof}

\subsection{Specification at $t=1$}
In this section we define a multiplication on $\C[x,y][t]$ which gives a formal deformation of $N$, such that at specification $t=1$ we get an algebra isomorphic to $A$.

\begin{prop}\label{iso}
    Let $\xi:\C[x,y][t]\rightarrow A$ be a homomorphism of $\C$-algebras such that
    \[\xi(x)=\sum_{i=0}^na_i, \quad \xi(y)=\sum_{i=0}^nb_i,\quad \xi(t)=1.\]
    Further, assume that for every $r\in A$ there exists $i=i(r)$ such that $rt^i\in \im f$. Then $\ker \xi=\langle I, t-1\rangle.$
\end{prop}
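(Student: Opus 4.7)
The plan is to factor $\xi$ through evaluation at $t=1$ and thereby reduce the statement to an equality of ideals in $\C[x,y]$. Let $\pi:\C[x,y][t]\to \C[x,y]$ be the $\C$-algebra surjection specified by $t\mapsto 1$, and let $\eta:\C[x,y]\to A$ send $x\mapsto \sum_i a_i$ and $y\mapsto \sum_i b_i$. Then $\xi=\eta\circ\pi$. Since $\ker\pi=(t-1)\C[x,y][t]$ and $\pi$ is surjective, one has $\ker\xi=\pi^{-1}(\ker\eta)$ and $\langle I,t-1\rangle=\pi^{-1}(\pi(I))$, so it suffices to prove $\ker\eta=\pi(I)$ inside $\C[x,y]$.

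The inclusion $\pi(I)\subseteq \ker\eta$ will be immediate: if $p\in I$ then $f(p)=0$ in $A[t]$, and specializing at $t=1$ gives $\eta(\pi(p))=0$, because $\eta\circ\pi$ agrees with $\mathrm{ev}_{1}\circ f$ on the generators $x,y,t$ and both sides are algebra homomorphisms.

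For the reverse inclusion, I would take $q\in \ker\eta\subseteq \C[x,y]\subseteq \C[x,y][t]$. Since $f(q)\big|_{t=1}=\eta(q)=0$, there exists $g(t)\in A[t]$ with $f(q)=(t-1)g(t)$. The aim is to produce $h\in\C[x,y][t]$ and an integer $M\geq 0$ such that $qt^{M}-(t-1)h\in I$; applying $\pi$ will then yield $q=\pi(qt^M-(t-1)h)\in \pi(I)$, since $\pi(qt^M)=q$ and $\pi((t-1)h)=0$.

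The main obstacle is the construction of $h$, and this is precisely where the hypothesis that $rt^{i(r)}\in \im f$ for every $r\in A$ comes in. Fixing a basis $e_{1},\ldots,e_{n}$ of $A$ and setting $M:=\max_{j}i(e_{j})$, one uses that $\im f$ is a subalgebra of $A[t]$ containing $t$ to propagate the relations $e_{j}t^{i(e_{j})}\in \im f$ to the inclusion $At^{k}\subseteq \im f$ for every $k\geq M$. In particular $g(t)t^{M}\in \im f$, so I can choose $h\in \C[x,y][t]$ with $f(h)=g(t)t^{M}$, and then
\[f(qt^{M}-(t-1)h)=(t-1)g(t)t^{M}-(t-1)g(t)t^{M}=0,\]
which gives $qt^{M}-(t-1)h\in I$ and completes the argument. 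The only delicate step is absorbing a sufficient power of $t$ into $g(t)$ to land in $\im f$; once this is available, the rest of the proof is purely formal.
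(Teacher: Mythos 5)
Your proof is correct, and it uses the same essential mechanism as the paper's — the hypothesis that every $r\in A$ satisfies $rt^{i(r)}\in\im f$ is invoked to lift a suitable $t$-multiple of an element of $A$ back to $\C[x,y][t]$, and the discrepancy is absorbed into $\langle t-1\rangle$ — but your packaging is different and in one respect tighter. The paper takes an arbitrary $e=\sum_i\alpha_ip_it^{\beta_i}\in\ker\xi$, announces that it will adjust the exponents to get $\hat e=\sum_i\alpha_ip_it^{\gamma_i}\in I$ with $e-\hat e\in\langle t-1\rangle$, and then uses the hypothesis to produce preimages $c_i$ of $\overline{\overline{p_i}}\,t^{j_i}$; the element of $I$ it actually exhibits is $\sum_i\alpha_ic_it^{k-j_i}$, and the passage from this back to an element of the announced form $\hat e$ (the $c_i$ need not be of the form $p_it^{\gamma_i}$) is left implicit. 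You instead factor $\xi=\eta\circ\pi$ through evaluation at $t=1$, reduce to the ideal identity $\ker\eta=\pi(I)$, and produce the required element of $I$ explicitly as $qt^{M}-(t-1)h$ by dividing $f(q)$ by $t-1$ in $A[t]$; the correction term lying in $\langle t-1\rangle$ is exhibited rather than asserted, which I consider an improvement in rigour. Two points you should make explicit but which are not gaps: you need $A$ unital with $f(t)=t\cdot\1$ (as the paper arranges) so that $\im f$ is closed under multiplication by $t$ and the propagation from $e_jt^{i(e_j)}\in\im f$ to $At^{k}\subseteq\im f$ for all $k\geq M$ goes through; and division by $t-1$ in $A[t]$ is legitimate despite the noncommutativity of $A$ because $t$ is central and $t-1$ is monic, so the factorization $f(q)=(t-1)g(t)$ from $\mathrm{ev}_1(f(q))=0$ is valid.
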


\begin{proof}
    We let $e=\sum_{i}\alpha_i p_{i}t^{\beta{i}}\in \ker \xi$ where $p_{i}$ are monomials in $\C[x,y]$, $\alpha_i\in \C$ and $\beta_i\in \N$. We will show there exists $\gamma_i\in \N$  such that $\hat{e}:=\sum_{i}\alpha_i p_{i}t^{\gamma_i}\in I$, so that $e\in I+\langle t-1\rangle$ as $e-\hat{e}\in \langle t-1\rangle.$ This follows since 
    \[e-\hat{e} = \sum_{i}\alpha_i p_{i}t^{\beta_i}-\sum_{i}\alpha_i p_{i}t^{\gamma_i} = \sum_{i}\alpha_i p_{i}(t^{\beta_i}-t^{\gamma_i}) = \sum_{i}\alpha_i p_{i} t^m(t^n-1)\in \langle t-1\rangle\]
    for some $m,n\in \N$.

    Recall Notation \ref{not:pi} and note that $\xi(e) = \sum_{i}\alpha_i \ov{\ov{p_{i}}} =0$. Notice that by assumption there exist $j_i\in\N$ such that $\ov{\ov{p_i}}t^{j_i}\in \im f$. We let $f(c_i) = \ov{\ov{p_i}}t^{j_i}$ for some $c_i\in\C[x,y][t]$. Hence for large enough $k\in \N$ we have \[f\left(\sum_{i}\alpha_i c_{i} t^{k-j_i}\right)=\sum_{i}\alpha_i \ov{\ov{p_{i}}}t^{k}=0.\]
\end{proof}

\begin{prop}\label{prop:10} Let notation be as in Proposition \ref{iso}, and suppose that $\sum_{i=0}^na_i$ and $\sum_{i=0}^nb_i$ generate $A$ as a $\mathbb C$-algebra.
    We have 
    \[\frac{{\mathbb C}[x,y][t]/I}{\langle t-1+I \rangle}\cong A.\]
\end{prop}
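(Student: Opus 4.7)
The plan is to invoke the First Isomorphism Theorem together with the standard correspondence of ideals under quotienting. Since Proposition \ref{iso} already identifies $\ker\xi$ with $\langle I,\, t-1\rangle$, most of the technical work is done and only two short assembly steps remain: establish that $\xi$ is surjective, and relate $\mathbb{C}[x,y][t]/\langle I,\,t-1\rangle$ to the iterated quotient $(\mathbb{C}[x,y][t]/I)/\langle t-1+I\rangle$.

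First I would show $\xi$ is surjective. The image $\im\xi$ is a $\mathbb{C}$-subalgebra of $A$ containing $\xi(x)=\sum_{i=0}^n a_i$ and $\xi(y)=\sum_{i=0}^n b_i$; by the added hypothesis these two elements generate $A$ as a $\mathbb{C}$-algebra, so $\im\xi=A$. By the First Isomorphism Theorem we obtain
\[
\mathbb{C}[x,y][t]/\ker\xi \;\cong\; A,
\]
and by Proposition \ref{iso} we know $\ker\xi=\langle I,\,t-1\rangle$, so
\[
\mathbb{C}[x,y][t]/\langle I,\, t-1\rangle \;\cong\; A.
\]

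Next I would apply the standard correspondence of ideals for the quotient projection $\pi:\mathbb{C}[x,y][t]\to \mathbb{C}[x,y][t]/I$. Under $\pi$, the ideal $\langle I,\,t-1\rangle$ corresponds to the ideal generated by $t-1+I$ in $\mathbb{C}[x,y][t]/I$, and the Third Isomorphism Theorem then gives
\[
\mathbb{C}[x,y][t]/\langle I,\, t-1\rangle \;\cong\; \frac{\mathbb{C}[x,y][t]/I}{\langle t-1+I\rangle}.
\]
Chaining these two isomorphisms yields the desired conclusion.

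There is no real obstacle here beyond cleanly invoking Proposition \ref{iso}; the only subtlety worth noting is that the hypothesis in Proposition \ref{iso} (that every $r\in A$ admits some $i$ with $rt^i\in\im f$) is inherited in the statement of Proposition \ref{prop:10} through the phrase ``let notation be as in Proposition \ref{iso}.'' One should just check carefully that the generation hypothesis on $\sum a_i,\sum b_i$ really does buy surjectivity of $\xi$ (which it does because $\xi$ is a $\mathbb{C}$-algebra homomorphism), and that no stray factor of $t$ obstructs the correspondence of ideals (it does not, since $\pi$ is surjective). The whole argument is therefore a short diagram chase.
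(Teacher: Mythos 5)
Your proposal is correct and follows essentially the same route as the paper: identify $\langle t-1+I\rangle$ with $\langle I,t-1\rangle/I$ and apply the Third Isomorphism Theorem, then combine Proposition \ref{iso} with surjectivity of $\xi$ and the First Isomorphism Theorem. The only difference is cosmetic — you spell out why the generation hypothesis makes $\xi$ surjective, which the paper merely asserts.
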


\begin{proof}
    Note that the ideal $\langle t-1+I \rangle$ of ${\mathbb C}[x,y][t]/I$ equals the set 
    \[\frac{{\mathbb C}[x,y][t](t-1)+I}{I} = \frac{\langle I, t-1\rangle}{I}  = \{g(t-1)+I\mid g\in {\mathbb C}[x,y][t]\}.\]
    
    By the Third Isomorphism Theorem we have
    \[\frac{{\mathbb C}[x,y][t]/I}{\langle I, t-1\rangle /I} \cong \frac{{\mathbb C}[x,y][t]}{ \langle I, t-1\rangle }.\]
    By Proposition \ref{iso} we have $\langle I, t-1\rangle =\ker\xi$ and as $\xi$ is onto, 
    \[\frac{{\mathbb C}[x,y][t]}{\ker\xi}\cong A\]
    by the First Isomorphism Theorem.
\end{proof}

We now define a multiplication on ${\mathbb C}[x,y][t]/\langle t-1,I\rangle  $ which gives a deformation to $A$ at $t=1$. 

\begin{thm}\label{thm:iso1}
Let notation be as in Proposition \ref{iso}, and suppose that $\sum_{i=0}^na_i$ and $\sum_{i=0}^nb_i$ generate $A$ as a $\mathbb C$-algebra. 
Let $d_1,\dots,d_n$ be free generators of the $\C$-algebra $\C[d_1,\dots,d_n]$ and suppose $\zeta _{i,k,m}\in \mathbb C$, $\xi _{i,k,m}(t)\in {\mathbb C}[t]$ are as in Proposition \ref{prop:rel}. Then the multiplication rule
\[d_{k} \ast_t d_{m}=\sum_{i=1}^{n}(\zeta _{i,k,m}d_{i}+t\xi_{i,k,m}(t)d_{i}).\]
 gives a formal deformation such that $\ast_1$ gives the multiplication on an algebra generated by $d_1,\dots,d_n$ isomorphic to ${\mathbb C}[x,y][t]/\langle I, t-1\rangle $. 
\end{thm}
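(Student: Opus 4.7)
The formal-deformation portion of this statement is exactly what was proved in Theorem~\ref{thm formal deformation} (the multiplication rule is identical), so only the identification of the $t=1$ specialization requires new argument. My plan is to define a $\C$-algebra homomorphism $\phi$ from the algebra $B$ generated by $d_1,\dots,d_n$ under $\ast_1$ to $\C[x,y][t]/\langle I,t-1\rangle$ by $\phi(d_i)=q_i+\langle I,t-1\rangle$, and then prove it is bijective.

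Well-definedness falls directly out of Proposition~\ref{prop:rel}: the relation $q_kq_m - \sum_i(\zeta_{i,k,m}q_i + t\xi_{i,k,m}(t)q_i) \in I$ reduces modulo $\langle I,t-1\rangle$ to the very identity $d_kd_m = \sum_i(\zeta_{i,k,m}+\xi_{i,k,m}(1))d_i$ that defines $\ast_1$. For surjectivity, I would compose $\phi$ with the isomorphism $\C[x,y][t]/\langle I,t-1\rangle \cong A$ from Proposition~\ref{prop:10} induced by $\xi$, and show that the images $\xi(q_i)$ span $A$. The key input is the explicit form of the $q_i$ built in Proposition~\ref{basis}: one has $f(q_m)=e_{j_m}t^{k_m}+\sum_{i\notin\{j_1,\dots,j_m\}}\alpha_i(t)e_i$, whence $\xi(q_m)=e_{j_m}+\sum_{i\notin\{j_1,\dots,j_m\}}\alpha_i(1)e_i$. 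Since $\{j_1,\dots,j_n\}$ exhausts $\{1,\dots,n\}$ (a consequence of the assumption that $rt^i\in\im f$ for every $r\in A$ together with $n=\dim A$), these vectors are triangular with $1$'s on the diagonal and therefore form a $\C$-basis of $A$.

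For injectivity, the relations of $B$ permit reducing any product of generators to a $\C$-linear combination of $d_1,\dots,d_n$ by induction on word length, so $\dim B\leq n=\dim A$. Combined with the surjectivity just established, this forces $\phi$ to be an isomorphism onto $\C[x,y][t]/\langle I,t-1\rangle$. The main delicacy is in the surjectivity step: one has to verify that specializing $t=1$ does not destroy the triangularity of the $\xi(q_i)$ with respect to $\{e_1,\dots,e_n\}$, but this is clear since the leading coefficient $t^{k_m}$ evaluates to $1$, so the diagonal entries remain nonzero. Everything else is a routine consequence of Propositions~\ref{basis},~\ref{prop:rel},~\ref{3}, and~\ref{prop:10}.
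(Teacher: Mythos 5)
Your proposal is correct, and its skeleton coincides with the paper's: the same map $d_i\mapsto q_i+\langle I,t-1\rangle$, well-definedness read off from Proposition~\ref{prop:rel} specialised at $t=1$, and injectivity forced by the dimension count $\dim B\leq n=\dim A$ against Proposition~\ref{prop:10}. The one place you genuinely diverge is surjectivity. The paper disposes of it by citing the argument inside the proof of Proposition~\ref{7}, i.e.\ by showing that the generators $x+\langle I,t-1\rangle$ and $y+\langle I,t-1\rangle$ already lie in the subalgebra generated by the $q_i$'s, so that subalgebra is everything. You instead transport the question to $A$ via Proposition~\ref{prop:10} and observe that the specialisations $\xi(q_m)=e_{j_m}+\sum_{i\notin\{j_1,\dots,j_m\}}\alpha_i(1)e_i$ are unitriangular in the reordered basis $(e_{j_1},\dots,e_{j_n})$, hence a basis of $A$; this gives the stronger conclusion that the $q_i$ span the target as a \emph{vector space}, not merely generate it as an algebra. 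Your route is more self-contained (it does not lean on the internals of Proposition~\ref{7}) and it correctly flags, and resolves, the only delicate point: that the construction of Proposition~\ref{basis} exhausts all of $\{1,\dots,n\}$ because of the hypothesis that every $r\in A$ satisfies $rt^{i(r)}\in\im f$, and that evaluating $t^{k_m}$ at $1$ keeps the diagonal entries nonzero. The paper's route is shorter but leaves the reader to re-run the Proposition~\ref{7} argument in the new quotient; yours costs a few extra lines but makes the $t=1$ fibre's basis completely explicit.
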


\begin{proof}
We show that the algebra ${\mathbb C}[x,y][t]/\langle I, t-1\rangle $ is isomorphic to the algebra
 ${\mathbb C}[d_{1}, d_{2}, \ldots d_{n}]/I'$ where $I'$ is the ideal  of ${\mathbb C}[d_{1}, d_{2}, \ldots d_{n}]$ generated by elements 
\[d_{k} \ast_1 d_{m}-\sum_{i=1}^{n}(\zeta _{i,k,m}d_{i}+\xi_{i,k,m}(1)d_{i})\]
for $\zeta _{i,k,m}\in \mathbb C$, $\xi _{i,k,m}(t)\in {\mathbb C}[t]$  as in Proposition \ref{prop:rel}. 

We let 
$\delta: {\mathbb  C}[d_{1}, \ldots , d_{n}]\rightarrow {\mathbb C}[x,y][t]/\langle I, t-1\rangle $ be such that
\[\delta (d_{i})=q_{i}+\langle I, t-1\rangle ,\]
 for $i\in \{1,2, \ldots , n\}$. Observe that $I'\subseteq \ker(\delta )$ since
\begin{align*}
    \delta \left(d_{k} * d_{m}-\sum_{i=1}^{n}(\zeta _{i,k,m}d_{i}+\xi_{i,k,m}(1)d_{i}) \right)&= q_{k} * q_{m}-\sum_{i=1}^{n}(\zeta _{i,k,m}q_{i}+\xi_{i,k,m}(1)q_{i})\\
    &\quad +\langle I, t-1\rangle \\
    &=\langle I, t-1\rangle 
\end{align*}
 
 since $q_{k} \ast q_{m}-\sum_{i=1}^{n}(\zeta _{i,k,m}q_{i}+\xi_{i,k,m}(1)q_{i})\in I+\langle t-1\rangle $. 

Therefore the dimension of ${\mathbb  C}[d_{1}, \ldots , d_{n}]/\ker(\delta )$ does not exceed the dimension of  ${\mathbb  C}[d_{1}, \ldots , d_{n}]/I'$.
Notice that   ${\mathbb  C}[d_{1}, \ldots , d_{n}]/I' $ is spanned by elements $d_{i}+I$ as a vector space, and hence has dimension at most $n$.
On the other hand, by the First Isomorphism Theorem for rings: \begin{equation} \label{lines 5 prop 3.7}
    {\mathbb  C}[d_{1}, \ldots , d_{n}]/ker(\delta ) \cong \im(\delta)={\mathbb C}[x,y][t]/\langle I, t-1\rangle,
\end{equation} which in turn is isomorphic to $A$  by Proposition \ref{iso}, and hence has dimension $n$.
 It follows that $I'=\ker(\delta)$ and hence ${\mathbb  C}[d_{1}, \ldots , d_{n}]/ker(\delta )$ is isomorphic to ${\mathbb  C}[d_{1}, \ldots , d_{n}]/I'$. Where the equality in Equation \ref{lines 5 prop 3.7} is true by the proof of Proposition \ref{7}.
    
\end{proof}

\begin{remark}{\em Deformations at $t\neq 0$} The results from this section, hold with analogous proofs for a specialisation at other $t\neq 0$.
 Let $z \in \mathbb C$ and $z \neq 0$, the the 
 deformation at $t=z$ is also isomorphic to $A$, provided that
 $\sum_{i=0}^na_iz^{i}$ and $\sum_{i=0}^nb_iz^{i}$ generate $A$ as a $\mathbb C$-algebra. 
 \end{remark}

\begin{remark}
    Since the dimension of all the algebras in $\{N_t\}_{t \in [0, 1]}$ remains $n = \dim(A)$ the deformation arising from $*$ is flat. 
\end{remark}

\section{Application}

In this section we prove a conjecture of M. Wemyss using the method in section \ref{sec:method}. The statement is that of the following theorem.

\begin{thm}\label{thm:wem}
 Let 
 \[N = \frac{\C[x,y]}{\langle xy+yx, x^3+y^2, y^3\rangle}.\]
 Then $N$ deforms into  $A=M_2(\C)\oplus \C^{\oplus 5}$. 
\end{thm}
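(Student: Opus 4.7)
The plan is to apply the construction of Section \ref{sec:method} to $A = \mat_2(\C) \oplus \C^{\oplus 5}$ with a carefully chosen pair of generators. Since $\dim_\C A = 9$, Proposition \ref{3} forces $\dim_\C N = 9$; as a preliminary check, one verifies that $\C[x,y]/\langle xy+yx, x^3+y^2, y^3\rangle$ is indeed $9$-dimensional by using $xy = -yx$ to normalize monomials to the form $x^i y^j$, then $y^2 = -x^3$ to eliminate $y^j$ for $j \geq 2$, and $y^3 = 0$ to force $x^3 y = 0$, arriving at the candidate basis $\{1, x, x^2, x^3, x^4, x^5, y, xy, x^2 y\}$.

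The creative step is to choose generators of $A$ that reproduce exactly these relations via the method. I would set up a homomorphism $f : \C[x,y][t] \to A[t]$ as in Proposition \ref{iso}, with $f(x) = \sum_{i \geq 0} a_i t^i$ and $f(y) = \sum_{i \geq 0} b_i t^i$. The leading terms $a_0, b_0$ must satisfy the $N$-relations already in $A$ itself. Because $\C^{\oplus 5}$ is a reduced commutative algebra (a product of fields), the relation $y^3 = 0$ at $t = 0$ forces the $\C^{\oplus 5}$-component of $b_0$ to vanish, and then $x^3 + y^2 = 0$ at $t = 0$ forces the $\C^{\oplus 5}$-component of $a_0$ to vanish as well. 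Thus $a_0, b_0 \in \mat_2(\C)$; Cayley--Hamilton then gives $a_0^2 = b_0^2 = 0$, and the last relation reads $a_0 b_0 + b_0 a_0 = 0$. The higher-order terms $a_i, b_i$ must be chosen so that $\sum_{i \geq 0} a_i$ and $\sum_{i \geq 0} b_i$ generate $A$ as a $\C$-algebra, filling in both the missing directions of $\mat_2(\C)$ and all of $\C^{\oplus 5}$.

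Once such an $f$ is fixed, the rest is the bookkeeping dictated by the method: construct the basis $\{q_1, \dots, q_9\}$ of Proposition \ref{basis}, compute the scalars $\zeta_{i,k,m}$ and polynomials $\xi_{i,k,m}(t)$ of Proposition \ref{prop:rel}, and verify that the ideal $\langle J\rangle$ of Notation \ref{not:N} equals $\langle xy+yx, x^3+y^2, y^3\rangle$ and that $\dim_\C N = 9$. Theorem \ref{thm formal deformation} then yields a flat associative formal deformation $\ast_t$ whose specialisation at $t = 0$ is isomorphic to $N$, while Theorem \ref{thm:iso1} combined with Proposition \ref{prop:10} identifies the specialisation at $t = 1$ with $A$, completing the proof.

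The main obstacle is the choice of $t$-corrections. As noted above, the constraints at $t = 0$ force $a_0, b_0 \in \mat_2(\C)$ to be square-zero and anticommuting, but a short linear-algebra computation shows that any two such matrices in $\mat_2(\C)$ are necessarily proportional; hence $a_0, b_0$ alone span at most one dimension in $\mat_2(\C)$. The higher-order terms $a_i t^i, b_i t^i$ must therefore simultaneously supply the missing direction in $\mat_2(\C)$, supply five independent directions in $\C^{\oplus 5}$, and do so without introducing any spurious relations that would collapse $\dim_\C N$ below $9$. Engineering this interlocking system of constraints on the $a_i, b_i$ is the delicate part of the verification.
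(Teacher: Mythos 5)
Your proposal correctly identifies the strategic frame — run the Section \ref{sec:method} machinery on $A=\mat_2(\C)\oplus\C^{\oplus 5}$, check that the three relations land in $\langle J\rangle$ of Notation \ref{not:N}, and compare dimensions so that Theorem \ref{thm formal deformation} and Theorem \ref{thm:iso1} give the flat associative deformation — and your $9$-dimensional basis check for $\C[x,y]/\langle xy+yx,\,x^3+y^2,\,y^3\rangle$ is fine. But there is a genuine gap: the entire content of the paper's proof is the explicit construction you defer as ``engineering this interlocking system of constraints,'' and nothing in your proposal shows such a choice of $a_i,b_i$ exists. The paper produces it concretely (Notations \ref{not:alpha}--\ref{not:f}): $f(x)=t^2\bigl(\begin{smallmatrix} i_1&0\\0&i_2\end{smallmatrix}\bigr)+t^2\alpha$ and $f(y)=\tfrac{t^3}{\sqrt{1+e^2}}\bigl(\begin{smallmatrix}1&e\\e&-1\end{smallmatrix}\bigr)+t^3\beta$, where $i_1,i_2$ are roots of $z^2-z+1$, the $\alpha_i$ are the five distinct roots of $(2z-1)^2z^3+c^2$, and $\beta_i(2\alpha_i-1)=c$. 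These identities are exactly what make the verification work: $x^3+y^2\in\langle J\rangle$ because $\alpha^3+\beta^2=0$ follows from the choice of $g$, $xy+yx\in\langle J\rangle$ because $2\alpha\beta-\beta+\tfrac{i_2-i_1}{\sqrt{1+e^2}}\Tilde{1}=0$, and $y^3\in\langle J\rangle$ requires showing $\beta^3-\beta$ lies in the span of the four ``error'' vectors $\alpha^k(\alpha^2-\alpha+\Tilde{1})$, $\beta(\alpha^2-\alpha+\Tilde{1})$, which the paper does by multiplying through by $2\alpha-1$ and $\alpha+1$ and reducing to a spanning statement for $\{(\alpha^3+1)\alpha^k\}$. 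Without exhibiting data satisfying all of these simultaneously (plus the generation hypothesis at $t=1$, proved in the paper by a Vandermonde argument), the theorem is not proved; existence of such generators is precisely what is at stake.

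A secondary point: your analysis of the constant terms, while correct as far as it goes (elements of $\langle J\rangle$ do vanish on $(a_0,b_0)$, and square-zero anticommuting pairs in $\mat_2(\C)$ are proportional), pulls you toward the wrong mechanism. The paper simply takes $a_0=b_0=0$ — the images of $x$ and $y$ start at $t^2$ and $t^3$ — so the obstruction you worry about never arises; the leading coefficients themselves do \emph{not} satisfy the relations of $N$, and the relations instead appear in $\langle J\rangle$ through cancellations modulo $t$ (e.g.\ $xy+yx-t^2y+\tfrac{t^5(i_2-i_1)}{\sqrt{1+e^2}}\in I$, whose $t$-free part is $xy+yx$). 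Engaging with that mechanism, rather than constraining $a_0,b_0$, is where the actual construction lives.
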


We prove this theorem through a series of lemmas. Firstly, we fix notation in addition to the notation of Theorem \ref{thm:wem}.

\begin{notation}
    Let $i_1,i_2$ be roots of the polynomial $x^2-x+1$.
\end{notation}

\begin{notation}\label{not:alpha}
    Let $e\in \C$ be such that for $c=\frac{i_1-i_2}{\sqrt{1+e^2}}$ the polynomial 
    \[g(z) = (2z-1)^2z^3 +c^2\]
    has $5$ distinct roots $\alpha_1, \dots, \alpha_5\in \C$. Further, we assume that $e\neq 0$, $c^2\neq 3$ and $\alpha_i\neq\frac{1}{2}.$ Notice that $g(z)$ and $z^3+1$ have no common roots, so $\alpha_i\neq -1$ and $\alpha_j\notin\{i_1,i_2\}$.
\end{notation}

\begin{notation}\label{not:alpha,beta}
    Let $\alpha = (\alpha_1,\alpha_2,\alpha_3, \alpha_4, \alpha_5)$ and $\beta=(\beta_1,\beta_2,\beta_3, \beta_4, \beta_5)$  for $\beta_i\in\C$ such that 
    \[(\beta_1,\beta_2,\beta_3, \beta_4, \beta_5)(2\alpha_1-1,2\alpha_2-1, 2\alpha_3-1, 2\alpha_4-1, 2\alpha_5-1 ) = c(1,1,1,1,1).\]
    Notice that such $\beta_i$ exist since $\alpha_i\neq \frac{1}{2}$.
\end{notation}

\begin{lemma}
Let $\xi: \C[x,y][t]\rightarrow A$ be a homomorphism of $\C$-algebras such that
\begin{align*}
    \xi(x) &= \begin{pmatrix}
           i_1 & 0 \\
           0 & i_2
         \end{pmatrix}
         +  \alpha, \quad
    \xi(y) = \begin{pmatrix}
           1 & e \\
           e & -1
         \end{pmatrix}
         + \beta, \quad \xi(t)=1.
\end{align*}
Then $\xi(x), \xi(y)$ and $1$ generate $A$ as an algebra.

\end{lemma}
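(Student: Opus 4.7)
The plan is to construct, as polynomials in $\xi(x)$ together with $1$, each of the six central primitive idempotents of $A = M_2(\C) \oplus \C^{\oplus 5}$, which I will call $f_0, f_1, \ldots, f_5$: here $f_0$ is the identity of the matrix summand and $f_j$ ($1 \leq j \leq 5$) is the identity of the $j$-th scalar summand. Once these idempotents lie in the subalgebra generated by $\xi(x), \xi(y), 1$, each $f_j$ with $j \geq 1$ already exhausts the corresponding one-dimensional factor, and it remains only to generate the matrix block from $f_0 \xi(x)$ and $f_0 \xi(y)$.

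To extract $f_1, \ldots, f_5$ I would apply univariate Lagrange interpolation to $\xi(x)$. The point is that $\xi(x)$ is simultaneously diagonalisable: it acts as the scalars $i_1, i_2, \alpha_1, \ldots, \alpha_5$ on seven one-dimensional common eigenspaces (two from the matrix block and five from the scalar summands), and these seven values are pairwise distinct by Notation \ref{not:alpha}. For $j \in \{1, \ldots, 5\}$ the polynomial
\[
p_j(z) = \frac{(z-i_1)(z-i_2) \prod_{k \neq j}(z-\alpha_k)}{(\alpha_j-i_1)(\alpha_j-i_2) \prod_{k \neq j}(\alpha_j-\alpha_k)}
\]
has nonzero denominator, vanishes at the other six scalars and takes the value $1$ at $\alpha_j$, so $p_j(\xi(x)) = f_j$. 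Then $f_0 := 1 - \sum_{j=1}^5 f_j$ is forced to be the remaining idempotent.

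Next I would use the central idempotent $f_0$ to cut $\xi(x)$ and $\xi(y)$ down to the matrix block: the elements $f_0 \xi(x)$ and $f_0 \xi(y)$ lie in $f_0 A \cong M_2(\C)$ and equal $\mathrm{diag}(i_1, i_2)$ and $\begin{pmatrix} 1 & e \\ e & -1 \end{pmatrix}$ respectively. A standard fact in $M_2(\C)$ is that a diagonalisable matrix with two distinct eigenvalues together with any matrix not commuting with it generates the whole matrix algebra: polynomials in the first give all diagonal matrices in its eigenbasis, and a noncommuting partner supplies a nonzero off-diagonal entry, from which left and right multiplication by diagonals recovers the matrix units. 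Since $i_1 \neq i_2$ (the discriminant of $x^2 - x + 1$ is $-3$) and since $e \neq 0$ forces $\begin{pmatrix} 1 & e \\ e & -1 \end{pmatrix}$ to be non-diagonal in the standard basis, this generation step applies, and combined with the idempotents $f_1, \ldots, f_5$ we recover all of $A$.

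The main obstacle is essentially bookkeeping: verifying that the seven eigenvalues of $\xi(x)$ used in the interpolation are pairwise distinct, so that the denominators of the $p_j$ are nonzero. All of the required inequalities have, however, been built into Notation \ref{not:alpha}, so once the idempotent construction is laid down the argument becomes mechanical.
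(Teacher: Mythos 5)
Your proof is correct and follows essentially the same route as the paper's: the paper's factorisation $W=DV$ of the matrix with entries $\alpha_i^j(\alpha_i^2-\alpha_i+1)$ is exactly the Vandermonde inversion underlying your Lagrange interpolation (note $(z-i_1)(z-i_2)=z^2-z+1$, so your $p_j$ are linear combinations of the paper's elements $x^j(x^2-x+1)$), and both arguments finish by generating $M_2(\C)$ from $\mathrm{diag}(i_1,i_2)$ and $\bigl(\begin{smallmatrix} 1 & e \\ e & -1 \end{smallmatrix}\bigr)$. One caveat: your general principle that a diagonalisable matrix with distinct eigenvalues together with \emph{any} noncommuting matrix generates $M_2(\C)$ is false as stated (a diagonal matrix plus a strictly upper triangular one generates only the upper triangular algebra); the argument survives here because both off-diagonal entries of the matrix component of $\xi(y)$ equal $e\neq 0$, so both matrix units $e_{12}$ and $e_{21}$ are obtained.
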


\begin{proof}
Observe that $\xi(x^j(x^2-x+1)) = \alpha^j(\alpha^2-\alpha+1)$ (where $\alpha^j(\alpha^2-\alpha+1) = (\alpha_1^j(\alpha_1^2-\alpha_1+1), \dots, \alpha_5^j(\alpha_5^2-\alpha_5+1))$). Now let $W$ be the matrix with entries $W_{ij} = \alpha_i^j(\alpha_i^2-\alpha_i+1)$. Let $V$ be the Vandermonde matrix with entries $V_{ij} = \alpha_i^{j}$. Let $D$ be a diagonal matrix with entries $D_{ij} = \delta_{ij}(\alpha_{i}^2-\alpha_i+1)$. It follows that $W=DV.$ Now since $V$ is nonsingular, $W$ is nonsingular. Now this implies that any $5$-dimensional vector in $A$ is in $\im \xi$. Then $\xi(x)-\alpha\in \im \xi$ and hence
\[
\begin{pmatrix}
           i_1 & 0 \\
           0 & i_2
         \end{pmatrix}  \in \im \xi.
\]
Similarly, $\xi(y)-\beta\in \im \xi$ and hence
\[
\begin{pmatrix}
           1 & e \\
           e & -1
         \end{pmatrix}  \in \im \xi.
\]
Now recall that $i_1,i_2$ are roots of the polynomial $z^3+1$, so we have 
\[
\begin{pmatrix}
           i_1 & 0 \\
           0 & i_2
         \end{pmatrix}^3 =   \begin{pmatrix}
           -1 & 0 \\
           0 & -1
         \end{pmatrix}.
\]
Notice that every diagonal matrix is a linear combination of $\begin{pmatrix}
           i_1 & 0 \\
           0 & i_2
         \end{pmatrix}$ and $\begin{pmatrix}
           -1 & 0 \\
           0 & -1
         \end{pmatrix}$. Now, it can be checked that these matrices are sufficient to generate all matrices in $M_2(\C)$.

\end{proof}

\begin{notation}\label{not:f}
Let $f:\C[x,y][t]\rightarrow A[t]$ be a homomorphism of $\C$-algebras such that
\begin{align*}
    f(x) &= t^2\begin{pmatrix}
           i_1 & 0 \\
           0 & i_2
         \end{pmatrix}
         + t^2 \alpha, \quad
    f(y) = \frac{1}{\sqrt{1 + e^2}} t^3\begin{pmatrix}
           1 & e \\
           e & -1
         \end{pmatrix}
         + t^3 \beta, \quad f(t)=t\1, 
\end{align*}
where $\1$ is the identity element in $A$. We write $\Tilde{1} := (1, 1, 1, 1, 1)$.
As usual we denote by $1$ the identity element in $\C$ and thus in $\C[x, y][t]$. 
\end{notation}

We now prove Theorem \ref{thm:wem}.
\begin{proof}
    Referring to Notation \ref{not:alpha,beta} and \ref{not:f} we calculate:
    \begin{align*}
        f(x^2)&=t^4\Big( \begin{pmatrix}
           i_1^2 & 0 \\
           0 & i_2^2
         \end{pmatrix} +\alpha^2 \Big),\\
        f(y^2)&=t^6\Big( \begin{pmatrix}
           1 & 0 \\
           0 & 1
         \end{pmatrix} +\beta^2 \Big),\\
        f(xy)&=\frac{t^5}{\sqrt{1+e^2}}\Big( \begin{pmatrix}
           i_1 & i_1e \\
           i_2e & -i_2
         \end{pmatrix} +\alpha\beta \Big),\\
        f(yx)&=\frac{t^5}{\sqrt{1+e^2}}\Big( \begin{pmatrix}
           i_1 & i_2e \\
           i_1e & -i_2
         \end{pmatrix} +\alpha\beta \Big).
    \end{align*}

Note that $f(x^2-t^2x+t^4) = t^4(\alpha^2-\alpha+ \Tilde{1})$ and hence
\begin{align*}
    f(x(x^2-t^2x+t^4 )) &= t^6\alpha(\alpha^2-\alpha+\Tilde{1}),\\
    f(x^2(x^2-t^2x+t^4)) &= t^8\alpha^2(\alpha^2-\alpha+\Tilde{1}),\\
    f(y(x^2-t^2x+t^4)) &= t^7\beta(\alpha^2-\alpha+\Tilde{1}).\\
\end{align*}

Observe that
\begin{align*}
    f(xy+yx) &= \frac{t^5}{\sqrt{1+e^2}} \begin{pmatrix}
           2i_1 & e \\
           e & -2i_2
         \end{pmatrix}+ t^5 2 \alpha\beta, \\
    f(t^2y) &= \frac{t^5}{\sqrt{1+e^2}} \begin{pmatrix}
           1 & e \\
           e & -1
         \end{pmatrix}+ t^5 \beta, \\
    f(t^5\frac{i_2-i_1}{\sqrt{1+e^2}}) &= \frac{t^5}{\sqrt{1+e^2}} \begin{pmatrix}
           i_2-i_1 & 0 \\
           0 & i_2-i_1
         \end{pmatrix}+ \frac{t^5(i_2-i_1)}{\sqrt{1+e^2}} \Tilde{1}. \\
\end{align*}

Note that $2i_1-1+(i_2-i_1)=0$ and $-2i_2+1+(i_2-i_1)=0$, so
\[f(xy+yx-t^2y+\frac{t^5(i_2-i_1)}{\sqrt{1+e^2}})= 2\alpha\beta-\beta+ \frac{i_2-i_1}{\sqrt{1+e^2}}\Tilde{1}.
\] 
Now notice that $2\alpha\beta-\beta+ \frac{i_2-i_1}{\sqrt{1+e^2}}\Tilde{1}=0$ by assumptions of Notation \ref{not:alpha,beta}. It follows that $xy+yx-t^2y+\frac{t^5(i_2-i_1)}{\sqrt{1+e^2}} \in I$ and so $xy+yx\in \langle J\rangle$ is a relation in $N$ as defined in Notation \ref{not:N}.

Notice that by Notation \ref{not:alpha,beta} we have
$\beta^2(2\alpha-1)^2=c^2 \Tilde{1}$. Now it follows by Notation \ref{not:alpha} that \begin{equation} \label{alpha_plus_beta_is_zero}
    \alpha^3 + \beta^2 = 0,
\end{equation}
and thus $x^3+y^2\in \langle J\rangle$ is a relation in $N$.

We now proceed to show that $y^3\in \langle J\rangle$. Notice that: 
\[f(y^3) = \frac{t^9}{\sqrt{1+e^2}}\begin{pmatrix}
           1 & e \\
           e & -1
         \end{pmatrix}+t^9\beta^3.\]
Hence $y^3-t^6y = t^9(\beta^3-\beta).$ Therefore it is sufficient to show that $y^3-t^6y+q\in I$ for some $q\in t\C[x,y][t]$. We denote: 
\begin{align*}
    r_1 &:= t^5(x^2-t^2x+t^4) \\ 
    r_2 &:= t^3x(x^2-t^2x+t^4) \\ 
    r_3 &:= tx^2(x^2-t^2x+t^4) \\ 
    r_4 &:= t^2y(x^2-t^2x+t^4)
\end{align*}

Notice that the  elements 
\begin{align*}
    f(r_1) &= t^9(\alpha^2-\alpha+\Tilde{1})=:t^9e_1,\\
    f(r_2) &= t^9\alpha(\alpha^2-\alpha+\Tilde{1})=:t^9e_2,\\
    f(r_3) &= t^9\alpha^2(\alpha^2-\alpha+\Tilde{1})=:t^9e_3,\\
    f(r_4) &= t^9\beta(\alpha^2-\alpha+\Tilde{1})=:t^9e_4
\end{align*}
are in $t\C[x,y][t]$. Note that it suffices to show that $y^3-t^6y+q\in I$ for some $q\in \C r_1+\C r_2+\C r_3+\C r_4$. Therefore it suffices to show that 
\begin{equation}\label{eq:rel1}
    \beta^3-\beta\in  \C e_1+\C e_2+\C e_3+\C e_4 .
\end{equation}
Now relation (\ref{eq:rel1}) is equivalent to 
\[(2\alpha-1)\beta(\beta^2-1)\in  \C (2\alpha-1)e_1+\C (2\alpha-1)e_2+\C (2\alpha-1)e_3+\C (2\alpha-1)e_4,\]
which in turn is equivalent to
\begin{equation}\label{eq:rel2}
    -c(\alpha^3+1) \in  \C (2\alpha-1)e_1+\C (2\alpha-1)e_2+\C (2\alpha-1)e_3+\C (2\alpha-1)e_4.
\end{equation}
Now, notice that $\alpha^3 + 1 = (\alpha+1)(\alpha^2+\alpha+1)$ and that $\alpha+1$ has non-zero entries, so multiplying relation (\ref{eq:rel2}) by $\alpha+1$ we get an equivalent relation
\begin{align*}
    (\alpha^3+1)(\alpha+1)\in  \C &(2\alpha-1)(\alpha^3+1)+\C (2\alpha-1)(\alpha^3+1)\alpha\\
    &+\C (2\alpha-1)(\alpha^3+1)\alpha^2+\C (\alpha^3+1).
\end{align*}
Notice that the right hand side can be rewritten as $\C (\alpha^3+1)+\C (\alpha^3+1)\alpha+\C (\alpha^3+1)\alpha^2+\C (\alpha^3+1)\alpha^3$.


\end{proof}

\section{A more general method}

In this section we consider a more general method that results in algebras $N$ that are not necessarily local and have more generators. Furthermore, we consider $A$ to have finitely many generators.

Let $A$ be a finitely generated $\C$-algebra. 
\begin{enumerate}
    \item We consider a free $\C$-algebra $F$ with $j$ generators $x_1, \dots, x_j$ for some $j \in \N$. We define:
\begin{align*}
    f: F[t] &\to A[t] \\ 
      x_i &\mapsto \sum\limits_{l = 0}^m a_{i, l} t^l
\end{align*}
for some $m \in \N$ and some $a_{i,l}\in A$. We also assume that for each $z\in \mathbb C$, $z\neq 0$ elements  $\sum\limits_{l = 0}^m a_{i, l} z^l$ for $i=1,2,\ldots j$ generate $A$. In other words, images of the generators generate A when $t$ is set to non-zero numbers.

    \item We define \[\mathcal{N} := \frac{\C[x_1, \dots, x_j][t]}{I},\] where $I := \ker(f)$.

    \item We assume that for every $r\in A$ there exists $i=i(r)$ such that $rt^i\in \im f$. 
    \item Our algebra $N = N_0$ is {\bf isomorphic to} 
\[  \frac{\mathcal{N}}{\langle t +I\rangle}\]
 and deforms flatly and associatively to $A$. Moreover $N$ is isomorphic to 
 $\frac{\C[x_1, \dots, x_j][t]}{\langle I, t\rangle},$ where $\langle I, t\rangle $ is the ideal of $\C[x_1, \dots, x_j][t]$ generated by elements of $I$ and by $t$.

 If $j = 2$ then this method corresponds to the method described in section \ref{sec:method}  provided that $a_{i, 0} = 0$ for all $i$. 
\end{enumerate}

{\bf Proof.} 
 When reasoning as in the previous chapters we will use at all places $\mathbb C[x_{1}, \ldots , x_{j}]$ instead of $\mathbb C[x,y]$, and we will use $\mathbb C[x_{1}, \ldots , x_{j}][t]$ instead of $\mathbb C[x,y][t]$.
Let $A$ have dimension $n$ over $\mathbb C$.  Recall that we  \[\mathcal{N} := \frac{\C[x_1, \dots, x_j][t]}{I},\] where $I := \ker(f)$.
\begin{itemize}

\item Reasoning analogously as in Proposition \ref{basis} we obtain that  there are $q_{1}, \ldots , q_{n}\in F$ such that 
\[f(F[t])\subseteq    \sum_{i=1}^{n}{\mathbb C}f(q_{i}),\]
and hence, since $I=\ker f$, we get  
\[F(t)\subseteq  \sum_{i=1}^{n}{\mathbb C}q_{i}+I.\]

In particular, there are $\zeta _{i,k,m}\in \mathbb C$, $\xi_{i,k,m}(t)\in {\mathbb C}[t]$ such that 
 \[q_{k}\cdot q_{m}-(\sum_{i=1}^{n} \zeta _{i,k,m}q_{i}+t\xi_{i,k,m}q_{i})\in I.\]

\item Reasoning similarly as in Theorem \ref{thm formal deformation} 
 we obtain that if $d_{1}, \ldots , d_{n}$ are free generators  then
 \[d_{k}* d_{m}-(\sum_{i=1}^{n} \zeta _{i,k,m}d_{i}+t\xi_{i,k,m}d_{i}),\]
 gives a formal deformation such that $*_{0}$ gives a multiplication on $N$ where
 $N$ is the $\mathbb C$-algebra with generators $d_{1}, \ldots , d_{n}$ subject to relations 
  \[d_{k}*_{0} d_{m}=\sum_{i=1}^{n} \zeta _{i,k,m}d_{i}.\]

Observe that $*$ defines a multiplication on the algebra  $\mathbb C[d_{1}, \ldots , d_{n}][t]/I'$, where $I'$ is the ideal of 
 $\mathbb C[d_{1}, \ldots , d_{n}][t]$ generated by elements 
 \[d_{k}* d_{m}-(\sum_{i=1}^{n} \zeta _{i,k,m}d_{i}+t\xi_{i,k,m}d_{i}).\]
 Notice that $d_{1}, \ldots , d_{n}$ are free generators 
 of the free algebra  $\mathbb C[d_{1}, \ldots , d_{n}]$. 
 Notice that the algebra  $\mathbb C[d_{1}, \ldots , d_{n}][t]/I'$  is isomorphic to algebra 
$\mathcal{N}$. To see this consider the map 
 $\sigma  \mathbb C[d_{1}, \ldots , d_{n}][t]\rightarrow {\mathcal N}$ given by 
\[d_{i}\rightarrow q_{i}+I.\]
 Notice that $I'\subseteq \ker \sigma $ since  
 \[(q_{k}+I)* (q_{m}+I)=\sum_{i=1}^{n} \zeta _{i,k,m}(q_{i}+I)+t\xi_{i,k,m}(q_{i}+I).\]
 Recall  that $\ker \sigma =I'$ because if $a\in ker \sigma $ then 
by using relations from $I'$ we can present $a$ as $a'+i$ where $i'\in I$ and where 
 $a'=\sum_{i=1}^{n}\alpha _{i}(t)d_{i}$ for some $\alpha _{i}(t)\in {\mathbb C}[t]$.
  Notice that then $\sigma (i')=0$ and hence 
\[\sigma (a)=\sigma (a')=\sigma (\sum_{i=1}^{n}\alpha _{i}(t)d_{i})= \sum_{i=1}^{n}\alpha _{i}(t)q_{i}.\]
  Reasoning as in  Corollary $3.3$ we get that  this implies that all $\alpha _{i}=0$, hence $a'=0$, hence $a=i'$. Therefore, $\ker \sigma \subseteq I$ and consequently $\ker \sigma =I'$. 

 Therefore, by the First Isomorphism Theorem for algebras 
 ${\mathbb C}[d_{1}, \ldots , d_{n}][t]/I'$ is isomorphic to $Im (\sigma ) $ which
 is equal to $ {\mathcal N}$. To see that $\sigma $ is surjective, notice that
 $q_{1}+I, \ldots , q_{n}+I$ span $\mathcal N$ as a linear space 
(as mentioned before this can be proved similarly as Proposition \ref{basis}). 

\item  Reasoning as in Proposition \ref{3} we obtain that $N$ is $n$ dimensional $\mathbb C$-algebra. 
\item Reasoning similarly as in Proposition \ref{7}  
 we obtain that the algebra $N$ is isomorphic to the algebra  $\frac{\C[x_1, \dots, x_j][t]}{\langle I, t\rangle }$ where ${\langle I, t\rangle }$ is the ideal of  $\C[x_1, \dots, x_j][t]$ generated by elements from $I$ and by $t$. By the Third Isomorphism Theorem
 $N$ is isomorphic to 
 \[  \frac{\mathcal{N}}{\langle t +I\rangle}.\]
 This can be seen by reasoning similarly as in Proposition \ref{prop:10} but taking $t$ instead of $t-1$ in the proof. 

\item We will now consider deformations $*_{1}$ at $t=1$. Consider first the homomorphism 
 of  $\mathbb C$-algebras $\xi : F[t]\rightarrow A$ given by  
$\xi(x_{i})=\sum_{l=0}^{m}a_{i,l}$ for $i=1,2, \ldots , j$.

 Reasoning similarly as in Proposition \ref{iso} we obtain that $\ker \xi =\langle I, t-1\rangle$. 
 Reasoning analogously as in the last two lines of Proposition \ref{prop:10} we obtain that 
$F[t]/ \ker \xi $ is isomorphic to $A$.

\item Reasoning as in Theorem \ref{thm:iso1} we see that the algebra 
${\mathbb C}[d_{1}, \ldots , d_{n}][t]/I$ deforms at $t=1$ to
the algebra $F[t]/{\langle I, t-1\rangle} =F[t]/{\ker \xi }$ which is isomorphic to $A$.

\item We will now consider deformations $*_{z}$ at $t=z$ where $z\in \mathbb C, z\neq 0,$  $ z\neq 1$.    
Consider first the homomorphism 
 of  $\mathbb C$-algebras $\xi  : F[t]\rightarrow A$ given by  
$\xi (x_{i})=\sum_{l=0}^{m}a_{i,l}z ^{l}$ for $i=1,2, \ldots , j$. 

 Reasoning similarly as in Proposition \ref{iso} we obtain that $\ker \xi =\langle I, t-z\rangle $ (this can be done by using $t'=t/z$ at the place of $t$ in the proof of Proposition \ref{iso}, and in the first line declaring that $p_{i}\in F$ instead of $p_{i}\in {\mathbb C}[x,y]$). 
 
 Next, reasoning analogously as in the last two lines of Proposition \ref{prop:10} we obtain that 
$F[t]/ \ker \xi $ is isomorphic to $A$ (this follows since elements $\xi (x_{i})$ generate $A$).

 Next, reasoning as in Theorem \ref{thm:iso1} we see that the algebra 
${\mathbb C}[d_{1}, \ldots , d_{n}][t]/I$ deforms at $t=z$ to
the algebra $F[t]/{\langle I, t-z\rangle} =F[t]/{\ker \xi }$ which is  is isomorphic to $A$. \qed

\end{itemize}

\section{Future Work}
In this section we pose some questions about Method \ref{meth} and suggest some generalisations.

\begin{qu}
    Let $A$ be a $\C$-algebra and suppose $G_1 = \{a, b\}$ and $G_2 = \{a', b'\}$ are two distinct generating sets of $A$. Under what conditions does Method \ref{meth} using $A, G_1$ and $A, G_2$ result in the same algebra $N_0$ that has $A$ as a deformation?  
\end{qu}

\begin{qu}
    Let $A$ be a $\C$-algebra. Under what conditions does Method \ref{meth} only produce one algebra $N$ that has $A$ as a deformation. 
\end{qu}

\begin{qu}
    Let $A$ be a $\C$-algebra. Does their exist a finitely terminating algorithm that will produce all the algebras $N$ arising from Method \ref{meth} that have $A$ as a deformation? 
\end{qu}

\begin{qu}
    Let $A$ be a $\C$-algebra. Do ring theoretic properties of the generators $a, b$ (for example being idempotent, being irreducible idempotent, prime) determine any properties of the algebra $N$ arising from Method \ref{meth} using $A, \{a, b\}$?
\end{qu}

\begin{qu}
    Let $A$ be a $\C$-algebra. Do ring theoretic properties of $A$ (for example being semi-simple, local) determine any properties of the algebra $N$ arising from Method \ref{meth} using $A$?
\end{qu}

\begin{qu}
    Can Method \ref{meth} be generalised to be used for algebras over a general commutative ring?
\end{qu}

\begin{center}
\subsubsection*{\sc{Acknowledgements}} 
\end{center}
We are grateful to Michael Wemyss  for suggesting his interesting questions which inspired this paper, and for useful comments about contraction algebras and their applications in geometry.
The third author acknowledges support from the EPSRC programme grant EP/R034826/1 and from the EPSRC research grant EP/V008129/1.

\cleardoublepage
\phantomsection

\addcontentsline{toc}{chapter}{Bibliography}
\bibliographystyle{alpha}
\bibliography{bibliography}

\end{document}